\newtheorem{theorem}{Theorem}
\newtheorem{corollary}[theorem]{Corollary}
\newtheorem{definition}[theorem]{Definition}
\newtheorem{lemma}[theorem]{Lemma}
\newtheorem{remark}[theorem]{Remark}
\newenvironment{proof}[1][Proof]{\noindent\textbf{#1.} }{\ \rule{0.5em}{0.5em}}
\numberwithin{equation}{section}
\numberwithin{theorem}{section}
\begin{document}

\title{The local sharp maximal function and BMO \\on locally homogeneous spaces}
\author{Marco Bramanti, Maria Stella Fanciullo\thanks{2000 AMS Classification: Primary
42B25. Secondary: 42B35, 46E30. Key-words: Locally homogeneous space; local
sharp maximal function; local BMO; Fefferman-Stein inequality; John-Nirenberg
inequality}}
\maketitle

\begin{abstract}
We prove a local version of Fefferman-Stein inequality for the local sharp
maximal function, and a local version of John-Nirenberg inequality for locally
BMO functions, in the framework of locally homogeneous spaces, in the sense of
Bramanti-Zhu \cite{BZ}.

\end{abstract}

\section{Introduction}

Real analysis and the theory of singular integrals have been developed first
in the Euclidean setting and then in more general contexts, in view of their
applications to harmonic analysis, partial differential equations, and complex
analysis. Around 1970 the theory of \emph{spaces of homogeneous type}, that is
quasi-metric doubling measure spaces,\emph{ }started to be systematically
developed in the monograph by Coifman-Weiss \cite{CW} and was successfully
applied to several fields. Much more recently, some problems arising from the
quest of a-priori estimates for PDEs suggested that real analysis would be a
more flexible and useful tool if its concepts and results were stated also in
a \emph{local }version. The meaning of this localization is, roughly speaking,
the following: we want an abstract theory which, when applied to the concrete
setting of a bounded domain $\Omega\subset\mathbb{R}^{n}$ endowed with a local
quasidistance $\rho$ and a locally doubling measure $d\mu$ (typically, the
Lebesgue measure), brings to integrals over metric balls $B_{r}\left(
x\right)  $ \emph{properly contained} in $\Omega$, and never requires to
compute integrals over sets of the kind $B_{r}\left(  x\right)  \cap\Omega,$
as happens when we apply the standard theory of spaces of homogeneous type to
a bounded domain $\Omega$. These versions, however, are not easily obtained
\emph{a posteriori }from the well established theory; instead, they require a
careful analysis which often poses nontrivial new problems.

For instance, global $L^{p}$ estimates for certain operators of
Ornstein-Uhlenbeck type were proved in \cite{BCLP} using results from a theory
of nondoubling spaces, developed in \cite{B}, which in particular applies to
certain locally doubling spaces. Bramanti-Zhu in \cite{BZ} developed instead a
theory of singular and fractional integrals in locally doubling spaces; these
results were applied in \cite{BZ2} to the proof of $L^{p}$ and Schauder
estimates for nonvariational operators structured on H\"{o}rmander's vector fields.

The aim of this paper is to continue the theory of locally doubling spaces, as
started in \cite{BZ}, with two main results: a local version of
Fefferman-Stein's theorem regarding the sharp maximal function, and a local
version of John-Nirenberg inequality about $BMO$ functions. The first of these
results, in its Euclidean version, is a key ingredient of a novel approach to
the proof of $L^{p}$ estimates for nonvariational elliptic and parabolic
operators with possibly discontinuous coefficients, first devised by Krylov in
\cite{K}; the present extension can open the way to the application of these
techniques to more general differential operators, as shown in \cite{BT}.

\bigskip

\noindent\textbf{Comparison with the existent literature and main results.}
The \textquotedblleft sharp maximal function\textquotedblright\ was introduced
in the Euclidean context by Fefferman-Stein in \cite{FS}, where the related
$L^{p}$ inequality was proved. In some spaces of generalized homogeneous type,
this operator has been introduced and studied by Lai in \cite{L}. \emph{Local}
sharp maximal functions have been studied by several authors (with different
definitions). Here we follow the approach of Iwaniec \cite{Iw} who proves, in
the Euclidean context, a version of local sharp maximal inequality. His proof
relies on a clever adaptation of Calder\'{o}n-Zygmund decomposition, and the
striking fact is that this construction can be adapted quite naturally to the
abstract context of locally homogeneous spaces, exploiting the properties of
the \textquotedblleft dyadic cubes\textquotedblright\ abstractly constructed
in this framework in \cite{BZ}. Our first main result is the sharp maximal
inequality stated in Theorem \ref{Thm FS}. This statement involves dyadic
cubes and the dyadic local sharp maximal function (see Definition
\ref{Def dyadic sharp}); since, however, these \textquotedblleft
cubes\textquotedblright\ are abstract objects which in the concrete
application of the theory are not easily visualized, it is convenient to
derive from Theorem \ref{Thm FS} some consequences formulated in the language
of metric balls and the local sharp maximal function (defined by means of
balls, instead of dyadic cubes, see Definition \ref{Def sharp}). These
results, more easily applicable, are Corollaries \ref{coroll 1},
\ref{coroll 2}, \ref{coroll 3}. 

The space $BMO$ of functions with bounded mean oscillation was introduced in
\cite{JN}, where the famous \textquotedblleft John-Nirenberg
inequality\textquotedblright\ is proved. Versions of this space and this inequality in spaces
of homogeneous type have been given by several authors, starting with Buckley
\cite{Bu} (see also Caruso-Fanciullo \cite{CF} and Dafni-Yue \cite{DY}). To adapt this result to our context,
we follow the approach contained in Mateu, Mattila, Nicolau, Orobitg
\cite[Appendix]{MMNO}, see also Castillo, Ramos Fern\'{a}ndez, Trousselot
\cite{CFT}. Our main result is Theorem \ref{john-nirenberg}, with its useful
consequence, Theorem \ref{Thm BMOp}, stating that we can equivalently compute
the mean oscillation of a function or its $L^{p}$ version for any $p\in\left(
1,\infty\right)  $, always computing averages over small balls.

\bigskip

\noindent\textbf{Plan of the paper. }Section 2 contains some basic facts about
locally doubling spaces; in Section 3 the local sharp maximal function is
studied and several $L^{p}$ inequalities are proved about it; in section 4 the
local John-Nirenberg inequality is proved.

\section{Preliminaries about locally homogeneous spaces}

We start recalling the abstract context of locally homogeneous spaces, as
introduced in \cite{BZ}.

(H1) Let $\Omega$ be a set, endowed with a function $\rho:\Omega\times
\Omega\rightarrow\lbrack0,\infty)$ such that for any $x,y\in\Omega$:

(a) $\rho\left(  x,y\right)  =0\Leftrightarrow x=y;$

(b) $\rho\left(  x,y\right)  =\rho\left(  y,x\right)  .$

For any $x\in\Omega,r>0,$ let us define the ball%
\[
B\left(  x,r\right)  =\left\{  y\in\Omega:\rho\left(  x,y\right)  <r\right\}
.
\]
These balls can be used to define a topology in $\Omega$, saying that
$A\subset\Omega$ is open if for any $x\in A$ there exists $r>0$ such that
$B\left(  x,r\right)  \subset A$. Also, we will say that $E\subset\Omega$ is
bounded if $E$ is contained in some ball.

Let us assume that:

(H2) (a) the balls are open with respect to this topology;

(H2) (b) for any $x\in\Omega$ and $r>0$ the closure of $B\left(  x,r\right)  $
is contained in $\left\{  y\in\Omega:\rho\left(  x,y\right)  \leq r\right\}
.$

It can be proved (see \cite[Prop. 2.4]{BZ}) that the validity of conditions
(H2) (a) and (b) is equivalent to the following:

(H2') $\rho\left(  x,y\right)  $ is a continuous function of $x$ for any fixed
$y\in\Omega.$

(H3) Let $\mu$ be a positive regular Borel measure in $\Omega.$

(H4) Assume there exists an increasing sequence $\left\{  \Omega_{n}\right\}
_{n=1}^{\infty}$ of bounded measurable subsets of $\Omega,$ such that:%

\begin{equation}%
{\displaystyle\bigcup_{n=1}^{\infty}}
\Omega_{n}=\Omega\label{Hp 0}%
\end{equation}
and such for, any $n=1,2,3,...$:

(i) the closure of $\Omega_{n}$ in $\Omega$ is compact;

(ii) there exists $\varepsilon_{n}>0$ such that%
\begin{equation}
\left\{  x\in\Omega:\rho\left(  x,y\right)  <2\varepsilon_{n}\text{ for some
}y\in\Omega_{n}\right\}  \subset\Omega_{n+1}; \label{Hp 1}%
\end{equation}
\qquad

We also assume that:

(H5) there exists $B_{n}\geq1$ such that for any $x,y,z\in\Omega_{n}$%
\begin{equation}
\rho\left(  x,y\right)  \leq B_{n}\left(  \rho\left(  x,z\right)  +\rho\left(
z,y\right)  \right)  ;\label{Hp 2}%
\end{equation}

(H6) there exists $C_{n}>1$ such that for any $x\in\Omega_{n},0<r\leq
\varepsilon_{n}$ we have%
\begin{equation}
\text{ }0<\mu\left(  B\left(  x,2r\right)  \right)  \leq C_{n}\mu\left(
B\left(  x,r\right)  \right)  <\infty. \label{Hp 3}%
\end{equation}
(Note that for $x\in\Omega_{n}$ and $r\leq\varepsilon_{n}$ we also have
$B\left(  x,2r\right)  \subset\Omega_{n+1}$).

\begin{definition}
\label{Def loc hom space}We will say that $\left(  \Omega,\left\{  \Omega
_{n}\right\}  _{n=1}^{\infty},\rho,\mu\right)  $ is a \emph{locally
homogeneous space }if assumptions (H1) to (H6) hold.
\end{definition}

\noindent\textbf{Dependence on the constants. }The numbers $\varepsilon
_{n},B_{n},C_{n}$ will be called \textquotedblleft the constants of
$\Omega_{n}$\textquotedblright. It is not restrictive to assume that
$B_{n},C_{n}$ are nondecreasing sequences, and $\varepsilon_{n}$ is a
nonincreasing sequence. Throughout the paper our estimates, for a fixed
$\Omega_{n},$ will often depend not only on the constants of $\Omega_{n},$ but
also (possibly) on the constants of $\Omega_{n+1},\Omega_{n+2},\Omega_{n+3}$.
We will briefly say that \textquotedblleft a constant depends on
$n$\textquotedblright\ to mean this type of dependence.

\bigskip

In the language of \cite{CW}, $\rho$ is a \emph{quasidistance} in each set
$\Omega_{n};$ we can also say that it is a \emph{local quasidistance in}
$\Omega.$ We stress that the two conditions appearing in (H2) are logically
independent each from the other, and they do not follow from (\ref{Hp 2}),
even when $\rho$ is a quasidistance\emph{ }in $\Omega,$ that is when
$B_{n}=B>1$ for all $n$. If, however, $\rho$ is a \emph{distance }in $\Omega$,
that is $B_{n}=1$ for all $n$, then (H2) is automatically fulfilled. The
continuity of $\rho$ also implies that (\ref{Hp 2}) still holds for
$x,y,z\in\overline{\Omega}_{n}.$ Also, note that $\mu\left(  \Omega
_{n}\right)  <\infty$ for every $n$, since $\overline{\Omega}_{n}$ is compact.

\bigskip

The basic concepts about Vitali covering lemma, the local maximal function and
its $L^{p}$ bound can be easily adapted to this context:

\begin{lemma}
[Vitali covering Lemma]\label{Lemma Vitali}Let $E$ be a measurable subset of
$\Omega_{n}$ and let $\left\{  B\left(  x_{\lambda},r_{\lambda}\right)
\right\}  _{\lambda\in\Lambda}$ be a family of balls with centers $x_{\lambda
}\in\Omega_{n}$ and radii $0<r_{\lambda}\leq r_{n}\equiv2\varepsilon
_{n}/\left(  2B_{n+1}+3B_{n+1}^{2}\right)  $, such that $E\subset%
{\displaystyle\bigcup\limits_{\lambda\in\Lambda}}
B\left(  x_{\lambda},r_{\lambda}\right)  $. Then one can select a countable
subcollection $\left\{  B\left(  x_{\lambda_{j}},r_{\lambda_{j}}\right)
\right\}  _{j=1}^{\infty}$ of mutually disjoint balls so that
\begin{equation}
E\subset%
{\displaystyle\bigcup\limits_{j=1}^{\infty}}
B\left(  x_{\lambda_{j}},K_{n}r_{\lambda_{j}}\right)  \label{Ch6-Vitali1}%
\end{equation}
with $K_{n}=\left(  2B_{n+1}+3B_{n+1}^{2}\right)  $ and, for some constant $c$
depending on $\Omega$,%
\begin{equation}
\underset{j=1}{\overset{\infty}{\sum}}\mu\left(  B\left(  x_{\lambda_{j}%
},r_{\lambda_{j}}\right)  \right)  \geq c\mu\left(  E\right)  .
\label{Ch6-Vitali2}%
\end{equation}

\end{lemma}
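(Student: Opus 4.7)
The argument is the standard greedy selection for Vitali-type covering lemmas, adapted so that every estimate stays inside $\Omega_{n+1}$, where the quasi-triangle inequality holds with the single constant $B_{n+1}$.

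First I would carry out the selection inductively. Set $\mathcal{F}_0=\{B(x_\lambda,r_\lambda)\}_{\lambda\in\Lambda}$ and, having chosen mutually disjoint balls $B(x_{\lambda_1},r_{\lambda_1}),\dots,B(x_{\lambda_j},r_{\lambda_j})$, let $\mathcal{F}_j$ be the subfamily of balls that are both unselected so far and disjoint from every $B(x_{\lambda_i},r_{\lambda_i})$, $i\leq j$. If $\mathcal{F}_j\neq\emptyset$, put $R_j=\sup\{r_B:B\in\mathcal{F}_j\}\leq r_n$ and pick $B(x_{\lambda_{j+1}},r_{\lambda_{j+1}})\in\mathcal{F}_j$ with $r_{\lambda_{j+1}}>R_j/2$; otherwise stop. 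This produces a (finite or countable) disjoint subfamily.

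Next I would verify the covering property (\ref{Ch6-Vitali1}). Any unselected ball $B(x_\lambda,r_\lambda)$ is excluded at some stage, so it meets $B(x_{\lambda_j},r_{\lambda_j})$ for some smallest index $j$; the condition $\lambda\in\mathcal{F}_{j-1}$ forces $r_\lambda\leq R_{j-1}<2r_{\lambda_j}$. For $y\in B(x_\lambda,r_\lambda)$ and a point $z$ in the intersection, two applications of (\ref{Hp 2}) yield
\[
\rho(y,x_{\lambda_j})\leq B_{n+1}\rho(y,x_\lambda)+B_{n+1}^{2}\bigl(\rho(x_\lambda,z)+\rho(z,x_{\lambda_j})\bigr)<(B_{n+1}+B_{n+1}^{2})r_\lambda+B_{n+1}^{2}r_{\lambda_j}\leq K_n r_{\lambda_j},
\]
so $y\in B(x_{\lambda_j},K_n r_{\lambda_j})$. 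The use of $B_{n+1}$ is legitimate because the bound $r_\lambda,r_{\lambda_j}\leq r_n<2\varepsilon_n$ and assumption (\ref{Hp 1}) place $x_\lambda,x_{\lambda_j},y,z$ all inside $\Omega_{n+1}$. Finally, for (\ref{Ch6-Vitali2}) I would iterate the local doubling condition (H6) a bounded number of times: since $K_n r_{\lambda_j}\leq 2\varepsilon_n$, finitely many doublings (possibly invoking the doubling constants of $\Omega_{n+1}$ and $\Omega_{n+2}$ to handle successive dilations past $\varepsilon_n$, under the paper's convention that constants may depend on $n$) give $\mu(B(x_{\lambda_j},K_n r_{\lambda_j}))\leq c\,\mu(B(x_{\lambda_j},r_{\lambda_j}))$. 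Summing over the disjoint selected balls and using $E\subset\bigcup_j B(x_{\lambda_j},K_n r_{\lambda_j})$ concludes the proof.

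The main obstacle I anticipate is exactly the bookkeeping that makes the local quasi-triangle inequality usable with a single constant: the choice $K_n=2B_{n+1}+3B_{n+1}^{2}$ and the radius threshold $r_n=2\varepsilon_n/K_n$ are calibrated so that the dilated balls $B(x_{\lambda_j},K_n r_{\lambda_j})$ are still contained in $\Omega_{n+1}$, and a parallel care is needed to iterate the doubling (H6) enough times to reach the factor $K_n$ without leaving the range where the doubling constants apply.
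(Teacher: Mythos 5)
The paper itself gives no proof of Lemma \ref{Lemma Vitali}; it states the result as standard material adapted from \cite{BZ}, so there is no in-paper argument to compare against. Your proposal is the standard greedy Vitali selection and is essentially correct: the chain $\rho(y,x_{\lambda_j})\leq B_{n+1}\rho(y,x_\lambda)+B_{n+1}^2(\rho(x_\lambda,z)+\rho(z,x_{\lambda_j}))$ with $r_\lambda<2r_{\lambda_j}$ gives exactly $\rho(y,x_{\lambda_j})<(2B_{n+1}+3B_{n+1}^2)r_{\lambda_j}=K_n r_{\lambda_j}$, and all points involved lie in $\Omega_{n+1}$ thanks to $r_\lambda,r_{\lambda_j}\leq r_n<2\varepsilon_n$ and (\ref{Hp 1}), so the quasi-triangle constant $B_{n+1}$ applies. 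Two refinements. First, the assertion that every unselected ball is excluded at some finite stage needs (in the infinite case) that $R_j\to 0$; this follows because the selected balls are disjoint, lie in $\Omega_{n+1}$ (which has finite measure since $\overline{\Omega}_{n+1}$ is compact), and $\mu(B(x,r))$ is bounded below uniformly for $x\in\Omega_n$ and $r$ bounded away from zero, a consequence of (H6) and compactness that the paper itself uses later. Second, your caveat that the doubling iteration may require the constants of $\Omega_{n+1}$ or $\Omega_{n+2}$ is in fact unnecessary: the calibration $r_n=2\varepsilon_n/K_n$ guarantees $K_n r_{\lambda_j}\leq 2\varepsilon_n$, so halving repeatedly, $\mu(B(x_{\lambda_j},K_n r_{\lambda_j}))\leq C_n\,\mu(B(x_{\lambda_j},K_n r_{\lambda_j}/2))\leq\cdots$, uses (H6) at radii $K_n r_{\lambda_j}/2^i\leq\varepsilon_n$ for every $i\geq 1$, hence only the single constant $C_n$ of $\Omega_n$ is needed, and $c=C_n^{\lceil\log_2 K_n\rceil}$ works in (\ref{Ch6-Vitali2}).
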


We can then give the following

\begin{definition}
Fix $\Omega_{n},\Omega_{n+1}$ and, for any $f\in L^{1}\left(  \Omega
_{n+1}\right)  $ define the \emph{local maximal function}%
\[
M_{\Omega_{n},\Omega_{n+1}}f\left(  x\right)  =\sup_{\substack{B\left(
\overline{x},r\right)  \ni x\\r\leq r_{n}}}\frac{1}{\mu\left(  B\left(
\overline{x},r\right)  \right)  }\int_{B\left(  \overline{x},r\right)
}\left\vert f\left(  y\right)  \right\vert d\mu\left(  y\right)  \text{ for
}x\in\Omega_{n}%
\]
where $r_{n}=2\varepsilon_{n}/\left(  2B_{n+1}+3B_{n+1}^{2}\right)  $ is the
same number appearing in Vitali Lemma. (Actually, the following theorem still
holds if this number $r_{n}$ is replaced by any smaller number).
\end{definition}

Then (see \cite[Thm. 8.3]{BZ}):

\begin{theorem}
\label{Thm maximal}Let $f$ be a measurable function defined on $\Omega_{n+1}.$
The following hold:

(a) If $f\in L^{p}\left(  \Omega_{n+1}\right)  $ for some $p\in\left[
1,\infty\right]  $, then $M_{\Omega_{n},\Omega_{n+1}}f$ is finite almost
everywhere in $\Omega_{n}$;

(b) if $f\in L^{1}\left(  \Omega_{n+1}\right)  $, then for every $t>0$,
\[
\mu\left(  \left\{  x\in\Omega_{n}:\left(  M_{\Omega_{n},\Omega_{n+1}%
}f\right)  \left(  x\right)  >t\right\}  \right)  \leq\frac{c_{n}}{t}%
\int_{\Omega_{n+1}}\left\vert f\left(  y\right)  \right\vert d\mu\left(
y\right)  ;
\]

(c) if $f\in L^{p}\left(  \Omega_{n+1}\right)  $, $1<p\leq\infty$, then
$M_{\Omega_{n},\Omega_{n+1}}f\in L^{p}\left(  \Omega_{n}\right)  $ and
\[
\left\Vert M_{\Omega_{n},\Omega_{n+1}}f\right\Vert _{L^{p}\left(  \Omega
_{n}\right)  }\leq c_{n,p}\left\Vert f\right\Vert _{L^{p}\left(  \Omega
_{n+1}\right)  }.
\]

\end{theorem}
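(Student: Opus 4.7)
The proof follows the classical Calderón--Zygmund scheme: establish the weak $(1,1)$ bound (b) via the Vitali lemma, deduce (a) as a corollary, and obtain (c) by Marcinkiewicz interpolation against the trivial $L^\infty$ bound.

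For (b), fix $t>0$, set $E_t = \{x\in\Omega_n : M_{\Omega_n,\Omega_{n+1}} f(x) > t\}$, and for each $x\in E_t$ pick a witness ball $B_x = B(\bar x, r_x)$ with $x \in B_x$, $r_x\leq r_n$, and $\mu(B_x)^{-1}\int_{B_x}|f|\,d\mu > t$. Since $\rho(\bar x, x) < r_x \leq r_n < 2\varepsilon_n$, hypothesis (H4)(ii) places $\bar x$ in $\Omega_{n+1}$. Because Lemma \ref{Lemma Vitali} requires centers inside $\Omega_n$, I would recenter each witness ball at $x$ via (H5), using the inclusion $B_x\subset B(x, 2B_{n+1} r_x)$ together with iterated doubling to obtain comparable measures; this trades the witness balls for balls centered at $x$ at the cost of a multiplicative constant. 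Vitali then yields a disjoint subfamily $\{B_j\}_{j}$ whose $K_n$-dilates cover $E_t$, and the choice $K_n r_n = 2\varepsilon_n$ combined with (H4)(ii) places each dilate inside $\Omega_{n+1}$. Chaining the inequalities produces
\[
\mu(E_t) \leq \sum_j \mu(K_n B_j) \leq C \sum_j \mu(B_j) \leq \frac{C}{t}\sum_j \int_{B_j}|f|\,d\mu \leq \frac{C}{t}\|f\|_{L^1(\Omega_{n+1})},
\]
where the last inequality uses disjointness and $B_j\subset\Omega_{n+1}$.

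Part (a) then follows immediately: $\overline{\Omega}_{n+1}$ is compact, so $\mu(\Omega_{n+1})<\infty$ and $L^p(\Omega_{n+1})\subset L^1(\Omega_{n+1})$ for $p<\infty$; part (b) gives $\mu(\{M_{\Omega_n,\Omega_{n+1}}f=\infty\})=\lim_{t\to\infty}\mu(\{Mf>t\})=0$, while $p=\infty$ is trivial because $M_{\Omega_n,\Omega_{n+1}}f\leq\|f\|_\infty$. For (c), the endpoint bound $\|M_{\Omega_n,\Omega_{n+1}}f\|_{L^\infty(\Omega_n)}\leq\|f\|_{L^\infty(\Omega_{n+1})}$ is immediate from the definition of $M$; combined with the weak $(1,1)$ bound of (b), Marcinkiewicz interpolation---splitting $f = f\chi_{\{|f|\leq t/2\}}+f\chi_{\{|f|>t/2\}}$ at each level $t$ and integrating via the layer-cake formula $\|M_{\Omega_n,\Omega_{n+1}}f\|_p^p = p\int_0^\infty t^{p-1}\mu(\{Mf>t\})\,dt$---produces the strong $(p,p)$ bound.

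The main technical obstacle is the recentering and doubling step in (b): witness balls are centered in $\Omega_{n+1}$ rather than in $\Omega_n$, while (H6) supplies doubling only for radii $\leq\varepsilon_n$, so one must carry out the recentering and the iterated doublings inside $\Omega_{n+1}$ or $\Omega_{n+2}$ and track how the resulting constants propagate across several consecutive $\Omega_{n+k}$. This is precisely the ``dependence on $n$'' phenomenon the authors emphasize just before the theorem, and it is the only genuinely nonclassical ingredient of the argument.
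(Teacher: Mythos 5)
The paper does not prove Theorem \ref{Thm maximal} at all; it cites it verbatim from \cite[Thm.\ 8.3]{BZ} (the sentence immediately preceding the statement reads ``Then (see [BZ, Thm.\ 8.3]):''). So there is no internal proof to compare against. Your strategy---weak $(1,1)$ via the local Vitali lemma, part (a) as a corollary of (b) using $\mu(\Omega_{n+1})<\infty$, and strong $(p,p)$ via the layer-cake/Marcinkiewicz scheme against the trivial $L^\infty$ bound---is the standard Hardy--Littlewood argument and is the natural one to carry over to the local setting; it is surely the route taken in \cite{BZ}.

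There is, however, a quantitative gap in the recentering step that you flag but do not actually close, and it is sharper than a mere bookkeeping of constants. After replacing the witness ball $B(\overline{x},r_x)$ by $B(x,2B_{n+1}r_x)$ (centered in $\Omega_n$, which is what Lemma \ref{Lemma Vitali} requires), the new radius can be as large as $2B_{n+1}r_n$, which exceeds the cap $r_n$ in the hypotheses of Lemma \ref{Lemma Vitali}; consequently the dilated balls $B(x,K_n\cdot 2B_{n+1}r_x)$ need no longer lie in $\Omega_{n+1}$, and the chain $\mu(K_nB_j)\le C\mu(B_j)$ is no longer covered by (H6) at the stated scale. This is precisely why the paper, right after defining $M_{\Omega_n,\Omega_{n+1}}$, inserts the remark that ``the following theorem still holds if this number $r_n$ is replaced by any smaller number'': the clean way to make your argument run is to cap the radii in the definition of $M$ at $r_n/(2B_{n+1})$ (or some comparable fraction), so that after recentering one is still within the Vitali hypotheses, or equivalently to invoke a Vitali lemma at the $\Omega_{n+1}$ scale with centers in $\Omega_{n+1}$ and an appropriately smaller radius bound. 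Either fix is routine, but as written your proof leaves it implicit; stating the corrected cap explicitly would make the argument airtight and would also explain why the constants naturally involve $\Omega_{n+1}$ and $\Omega_{n+2}$, exactly the ``dependence on $n$'' the authors warn about.
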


By standard techniques, from the above theorem one can also prove the following:

\begin{theorem}
[Lebesgue differentiation theorem]\label{Thm Lebesgue}For every $f\in
L_{loc}^{1}\left(  \Omega_{n+1}\right)  $ and a.e. $x\in\Omega_{n}$ there
exists%
\[
\lim_{r\rightarrow0^{+}}\frac{1}{\mu\left(  B\left(  x,r\right)  \right)
}\int_{B\left(  x,r\right)  }f\left(  y\right)  d\mu\left(  y\right)
=f\left(  x\right)  .
\]
In particular, for every $f\in L_{loc}^{1}\left(  \Omega_{n+1}\right)  $ and
a.e. $x\in\Omega_{n},$%
\[
\left\vert f\left(  x\right)  \right\vert \leq M_{\Omega_{n},\Omega_{n+1}%
}f\left(  x\right)  \text{.}%
\]

\end{theorem}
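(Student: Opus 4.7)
The plan is to mimic the classical three-step Lebesgue differentiation argument: (i) verify the identity for continuous functions by bare-hands continuity, (ii) use density of continuous functions in $L^{1}$ to write a general $f$ as continuous plus a small $L^{1}$ error, (iii) absorb the error using the weak-type $(1,1)$ bound for $M_{\Omega_{n},\Omega_{n+1}}$ given in Theorem \ref{Thm maximal}(b).

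First I would localize. Since the claim is pointwise on $\Omega_{n}$ and concerns only the behaviour of $f$ on balls $B(x,r)$ with $r$ small (which are contained in $\Omega_{n+1}$ once $r\le\varepsilon_{n}$, by the remark after (H6)), I may replace $f$ by $f\chi_{\Omega_{n+1}}$ and assume $f\in L^{1}(\Omega_{n+1})$. Writing
\[
A_{r}f(x)=\frac{1}{\mu(B(x,r))}\int_{B(x,r)}f\,d\mu,\qquad Tf(x)=\limsup_{r\to 0^{+}}|A_{r}f(x)-f(x)|,
\]
the goal becomes $Tf=0$ a.e.\ on $\Omega_{n}$. For a continuous $g$ on $\overline{\Omega_{n+1}}$, the identity is automatic: by assumption (H2)(a) the ball $B(x,r)$ is a neighbourhood of $x$, whence $|A_{r}g(x)-g(x)|\le\sup_{y\in B(x,r)}|g(y)-g(x)|\to 0$ as $r\to 0^{+}$.

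To reduce the general case to the continuous one, I invoke density of $C(\overline{\Omega_{n+1}})$ in $L^{1}(\Omega_{n+1})$: given $\varepsilon>0$, split $f=g+h$ with $g$ continuous and $\|h\|_{L^{1}}<\varepsilon$. Since $Tg\equiv 0$, for $x\in\Omega_{n}$ and $r\le r_{n}$ the triangle inequality gives $Tf(x)\le M_{\Omega_{n},\Omega_{n+1}}h(x)+|h(x)|$, and Chebyshev's inequality together with the weak-$(1,1)$ estimate in Theorem \ref{Thm maximal}(b) yield
\[
\mu\bigl(\{x\in\Omega_{n}:Tf(x)>2t\}\bigr)\le\frac{(c_{n}+1)\varepsilon}{t}
\]
for every $t>0$. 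Sending $\varepsilon\to 0$ forces $\mu(\{Tf>2t\})=0$ for every $t>0$, hence $Tf=0$ a.e., which is the differentiation statement. The second assertion is then immediate: applying the first part to $|f|$, almost every $x\in\Omega_{n}$ is a Lebesgue point of $|f|$, so $|f(x)|=\lim_{r\to 0^{+}}A_{r}|f|(x)\le M_{\Omega_{n},\Omega_{n+1}}f(x)$.

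The main delicate point is the density of continuous functions in $L^{1}(\Omega_{n+1})$. This is classical for Radon measures on locally compact Hausdorff spaces, but here the topology is only that generated by the (quasi-)metric balls; the verification will have to combine the Borel regularity of $\mu$ assumed in (H3) with the compactness of $\overline{\Omega_{n+1}}$ guaranteed by (H4)(i), presumably via a Lusin/Urysohn argument carried out inside this compact set. Once this is granted, every remaining step is a routine adaptation of the Euclidean argument.
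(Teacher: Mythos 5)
The paper does not actually carry out this proof; it merely remarks that the theorem follows ``by standard techniques'' from the maximal function Theorem~\ref{Thm maximal}, and your three-step scheme (continuous functions, density, weak-$(1,1)$ absorption) is precisely the intended standard argument, including the final deduction of $|f(x)|\le M_{\Omega_n,\Omega_{n+1}}f(x)$ by applying the first part to $|f|$. Your computation $Tf\le Mh+|h|$ and the Chebyshev/weak-type step are correct.

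One step is stated incorrectly, though it is easily repaired. Writing ``I may replace $f$ by $f\chi_{\Omega_{n+1}}$ and assume $f\in L^1(\Omega_{n+1})$'' does nothing: $f$ is already defined only on $\Omega_{n+1}$, so $f\chi_{\Omega_{n+1}}=f$, and a function in $L^1_{loc}(\Omega_{n+1})$ need not lie in $L^1(\Omega_{n+1})$ (it may fail to be integrable near $\partial\Omega_{n+1}$, even though $\mu(\Omega_{n+1})<\infty$). The correct localization is to truncate to a compactly contained subset: pick a set $W$ with $\Omega_n\subset W$, $\overline W\subset\Omega_{n+1}$ compact, and such that every ball $B(x,r)$ with $x\in\Omega_n$ and $r$ below some $r_n'>0$ is contained in $W$. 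For instance $W=\bigcup_{x\in\Omega_n}B(x,\varepsilon_n/C)$ for $C>B_{n+1}$ works, using (H4)(ii) and the fact (noted in the paper) that the quasi-triangle inequality extends to $\overline{\Omega_{n+1}}$. Then $\overline W$ is a compact subset of $\Omega_{n+1}$, so $f\chi_W\in L^1(\Omega_{n+1})$, and $A_rf(x)=A_r(f\chi_W)(x)$ for $x\in\Omega_n$, $r\le r_n'$. Your flagged concern about density of continuous functions is legitimate but resolvable: $\overline{\Omega_{n+1}}$ is compact Hausdorff (Hausdorff follows from (H1)(a) plus (H2')), $\mu$ is a finite regular Borel measure there, so Lusin's theorem gives density of $C(\overline{\Omega_{n+1}})$ in $L^1(\mu\llcorner\overline{\Omega_{n+1}})$, which is what the argument needs.
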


\bigskip

A deep construction which is carried out in \cite[Thm. 8.3]{BZ}, adapting to
our local context an analogous construction developed in doubling spaces by
Christ \cite{Ch} is that of \emph{dyadic cubes}. Their relevant properties are
collected in the following:

\begin{theorem}
[Dyadic cubes]\label{Thm dyadic cubes}(See \cite[Thm. 3.1]{BZ}). Let $\left(
\Omega,\left\{  \Omega_{n}\right\}  _{n=1}^{\infty},\rho,\mu\right)  $ be a
locally homogeneous space. For any $n=1,2,3,...\ $there exists a collection
\[
\Delta_{n}=\left\{  Q_{\alpha}^{k}\subset\Omega,k=1,2,3...,\alpha\in
I_{k}\right\}
\]
(where $I_{k}$ is a set of indices) of open sets called \textquotedblleft
dyadic cubes subordinated to $\Omega_{n}$\textquotedblright\ , positive
constants $a_{0},c_{0},c_{1},c_{2},\delta\in\left(  0,1\right)  $ and a set
$E\subset\Omega_{n}$ of zero measure, such that for any $k=1,2,3...$ we have:

(a) $\forall\alpha\in I_{k},$ each $Q_{\alpha}^{k}$ contains a ball $B\left(
z_{\alpha}^{k},a_{0}\delta^{k}\right)  ;$

(b) $%
{\displaystyle\bigcup_{\alpha\in I_{k}}}
Q_{\alpha}^{k}\subset\Omega_{n+1};$

(c) $\forall\alpha\in I_{k},1\leq l\leq k,$ there exists $Q_{\beta}%
^{l}\supseteq Q_{\alpha}^{k};$

(d) $\forall\alpha\in I_{k},$ $\operatorname*{diam}\left(  Q_{\alpha}%
^{k}\right)  <c_{1}\delta^{k}$ and $\overline{Q_{\alpha}^{k}}\subset B\left(
z_{\alpha}^{k},c_{1}\delta^{k}\right)  \subset\Omega_{n+2};$

(e) $\ell\geq k\Longrightarrow\forall\alpha\in I_{k},\beta\in I_{l},Q_{\beta
}^{\ell}\subset Q_{\alpha}^{k}$ or $Q_{\beta}^{\ell}\cap Q_{\alpha}%
^{k}=\emptyset;$

(f) $\Omega_{n}\setminus%
{\displaystyle\bigcup_{\alpha\in I_{k}}}
Q_{\alpha}^{k}\subset E;$

(g) $\forall\alpha\in I_{k},$ $x\in Q_{\alpha}^{k}\setminus E,$ $j\geq1$ there
exists $Q_{\beta}^{j}\ni x;$

(h)
\begin{equation}
\mu\left(  B\left(  x,2r\right)  \cap Q_{\alpha}^{k}\right)  \leq c_{2}%
\mu\left(  B\left(  x,r\right)  \cap Q_{\alpha}^{k}\right)  \label{doubling}%
\end{equation}
for any $x\in Q_{\alpha}^{k}\setminus E,r>0.$ More precisely, for these $x$
and $r$ we have:%
\begin{equation}
\mu\left(  B\left(  x,r\right)  \cap Q_{\alpha}^{k}\right)  \geq\left\{
\begin{tabular}
[c]{ll}%
$c_{0}\mu\left(  B\left(  x,r\right)  \right)  $ & $\text{for }r\leq\delta
^{k}$\\
$c_{0}\mu\left(  Q_{\alpha}^{k}\right)  $ & $\text{for }r>\delta^{k}$%
\end{tabular}
\ \ \ \ \ \ \ \ \right.  \label{lower bounds cubes}%
\end{equation}

\end{theorem}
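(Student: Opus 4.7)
The plan is to adapt to the local framework the construction of dyadic cubes due to Christ \cite{Ch}, which is also the source in the global doubling setting. Since every assertion in the statement is quantified over the fixed index $n$, the whole construction must be carried out inside a controlled enlargement of $\Omega_{n}$: the cubes themselves will lie in $\Omega_{n+1}$ by (\ref{Hp 1}), the balls circumscribing them in $\Omega_{n+2}$, and the locally doubling property (\ref{Hp 3}) will be applied at scales $\leq\varepsilon_{n+2}$. This bookkeeping is what drives the dependence of $a_{0},c_{0},c_{1},c_{2},\delta$ on $n$, which must be tracked carefully throughout.

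First I would fix $\delta\in(0,1)$ small enough, depending on $B_{n+2}$ and $C_{n+2}$, and for each level $k\geq1$ select a maximal $\delta^{k}$-separated net of points $\{z_{\alpha}^{k}\}_{\alpha\in I_{k}}$ inside an appropriate subset of $\Omega_{n}$. Maximality together with the quasi-triangle inequality in $\Omega_{n+2}$ yields a covering $\Omega_{n}\setminus E\subset\bigcup_{\alpha}B(z_{\alpha}^{k},\delta^{k})$ and the disjointness of the inner balls $B(z_{\alpha}^{k},c\delta^{k})$. I would then define a parent map $\pi:I_{k+1}\to I_{k}$ that sends each $z_{\alpha}^{k+1}$ to a nearest $z_{\beta}^{k}$, producing a tree structure on the collection of centers, and I would arrange that the roots at level $k=1$ cover all $z_{\alpha}^{j}$ for $j>1$.

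The cubes are then assembled from this tree. A natural starting point is to declare $\widetilde{Q}_{\alpha}^{k}$ to be the union, over all descendants $z_{\beta}^{j}$ of $z_{\alpha}^{k}$ in later generations, of the small balls $B(z_{\beta}^{j},a_{0}\delta^{j})$. Choosing $a_{0}$ sufficiently small (depending on the constants of $\Omega_{n+2}$) ensures that the $\widetilde{Q}_{\alpha}^{k}$ at a given level are pairwise disjoint and that $\overline{\widetilde{Q}_{\alpha}^{k}}\subset B(z_{\alpha}^{k},c_{1}\delta^{k})$, via a geometric series bounding the diameters. After taking the open interior $Q_{\alpha}^{k}$ and discarding a null set $E$ where the centers accumulate, properties (a)--(g) follow in a fairly routine way from the construction, the covering/disjointness properties of the net, and the local Vitali-type estimates already established in \cite{BZ}.

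The serious obstacle is property (h), i.e.\ the inner doubling inequality (\ref{doubling}) and the lower density bounds (\ref{lower bounds cubes}). This is where Christ's \emph{small-boundary estimate} is essential: one must show that only a small proportion of the mass of each cube lies within a narrow $\delta^{k+m}$-neighbourhood of $\partial Q_{\alpha}^{k}$, and iterate this estimate across scales. The intrinsic difficulty is that $Q_{\alpha}^{k}$ is not characterized by any metric criterion, so boundary control must be propagated down the tree using only (\ref{Hp 3}) applied in $\Omega_{n+2}$. Once the small-boundary estimate is available, the lower bound (\ref{lower bounds cubes}) for $r\leq\delta^{k}$ follows by comparing $B(x,r)\cap Q_{\alpha}^{k}$ with a descendant cube of comparable size that is contained in $B(x,r)$ and sits well inside $Q_{\alpha}^{k}$; the case $r>\delta^{k}$ is handled by using $Q_{\alpha}^{k}$ itself, and the doubling (\ref{doubling}) is then immediate.
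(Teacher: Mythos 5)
This theorem is not proved in the present paper: it is quoted verbatim from \cite[Thm.\ 3.1]{BZ}, and the only original contribution here is the one-sentence observation following the statement that, because $\delta$ may be taken small, one may assume $c_{1}\delta<2\varepsilon_{n+1}$, which forces $B\left(z_{\alpha}^{k},c_{1}\delta^{k}\right)\subset\Omega_{n+2}$ in point (d). So there is no in-paper proof to compare your sketch against; the comparison has to be with the cited construction in \cite{BZ}, which, as the paper itself says, adapts Christ's argument from \cite{Ch} to the local setting.

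Your outline is consistent with that source: maximal $\delta^{k}$-separated nets, a parent map yielding a tree on the centers, cubes built from clusters of descendant balls, diameter control via a geometric series, and Christ's small-boundary estimate as the engine for point (h). You have also correctly identified the bookkeeping peculiar to the local setting (everything lives in $\Omega_{n+1}$ or $\Omega_{n+2}$, scales are bounded by $\varepsilon_{n+2}$, and in particular only $k\geq1$ occurs, so one never has to deal with large scales). Two cautions worth making explicit. First, your paragraph on (h) defers the entire technical content to the small-boundary estimate and then sketches the lower bound (\ref{lower bounds cubes}) only loosely; in \cite{BZ} this is the heart of the matter, and propagating the boundary estimate down the tree using only the locally doubling hypothesis (\ref{Hp 3}) restricted to scales $\leq\varepsilon_{n+2}$ is where all the work is. A proof submitted at this level of detail would not be self-contained. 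Second, in the case $r>\delta^{k}$ of (\ref{lower bounds cubes}), it is not enough to say ``use $Q_{\alpha}^{k}$ itself'': for $\delta^{k}<r\lesssim\delta^{k}$ one still needs the boundary estimate, since $Q_{\alpha}^{k}\subset B(x,r)$ only once $r$ exceeds a multiple of $c_{1}\delta^{k}$ determined by the quasi-triangle constant. None of this is a wrong turn --- it is the right roadmap --- but as written it is a plan, not a proof, and the statement here is in any case an imported result rather than one established in this paper.
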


The sets $Q_{\alpha}^{k}$ can be thought as \emph{dyadic cubes of side length}
$\delta^{k}$. Note that $k$ is a \emph{positive }integer, so we are only
considering \emph{small }dyadic cubes. The cubes $\left\{  Q_{\alpha}%
^{k}\right\}  $ are \emph{subordinated to }a particular $\Omega_{n}$, meaning
that they essentially cover $\Omega_{n}$ (that is, their union covers
$\Omega_{n}$ up to a set of zero measure) and are contained in $\Omega_{n+1}$.
Note that the cubes $Q_{\alpha}^{k}$ and all the constants depend on $n$, so
we should write, more precisely%
\[
\left\{  Q_{\alpha}^{\left(  n\right)  ,k}\right\}  _{\alpha\in I_{k}^{\left(
n\right)  }};\delta_{\left(  n\right)  };a_{0,\left(  n\right)  },c_{0,\left(
n\right)  },c_{1,\left(  n\right)  },c_{2,\left(  n\right)  }\text{,}%
\]
but we will usually avoid this heavy notation.

In the proof of the above theorem, $\delta$ is chosen \emph{small enough}, so
it \ is not restrictive to assume $c_{1}\delta<2\varepsilon_{n+1}$, which
implies that the ball $B\left(  z_{\alpha}^{k},c_{1}\delta^{k}\right)  $
appearing in point (d) is $\subset\Omega_{n+2}$. (We remark this fact because
in \cite{BZ} the inclusion $B\left(  z_{\alpha}^{k},c_{1}\delta^{k}\right)
\subset\Omega_{n+2}$ is not stated).

Point (h) contains a crucial information: the triple $\left(  Q_{\alpha}%
^{k},\rho,d\mu\right)  $ is a space of homogeneous type in the sense of
Coifman-Weiss, that is the measure $\mu$ of $\rho$-balls \emph{restricted to}
$Q_{\alpha}^{k}$ is \emph{doubling}. Note that in our context this property
could fail to be true, instead, for the measure $\mu$ of $\rho$-balls
\emph{restricted to} \emph{a fixed }$\rho$\emph{-ball}.

We will also need the following:

\begin{lemma}
[Covering Lemma]\label{Covering Lemma}For every $n$ and every positive integer
$k$ large enough, the set $\Omega_{n}$ can be essentially covered by a finite
union of dyadic cubes $Q_{\alpha}^{k}$ (subordinated to $\Omega_{n+1}$) with
the following properties:

(i) $Q_{\alpha}^{k}\subset B\left(  z_{\alpha}^{k},c_{1}\delta^{k}\right)
\subset\Omega_{n+1}$

(ii) $B\left(  z_{\alpha}^{k},c_{1}\delta^{k}\right)  \subset F_{\alpha}^{k}$
(essentially), $F_{\alpha}^{k}\subset$ $B\left(  z_{\alpha}^{k},c^{\prime
}\delta^{k}\right)  \subset\Omega_{n+1}$, where the set $F_{\alpha}^{k}$ is a
finite union of dyadic cubes $Q_{\beta_{\alpha}}^{k}$, hence $F_{\alpha}^{k}$
is a space of homogeneous type, that is satisfies (h) of the previous theorem.
\end{lemma}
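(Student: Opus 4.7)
The plan is to apply Theorem~\ref{Thm dyadic cubes} with $\Omega_{n+1}$ taking the role of $\Omega_n$, producing a family $\{Q_\alpha^k\}_{\alpha\in I_k}$ of dyadic cubes that, by property~(f), essentially covers $\Omega_{n+1}\supset\Omega_n$. I would fix $k$ large enough that $2B_{n+1}c'\delta^k<2\varepsilon_n$, where $c'$ is the slightly larger scale constant introduced in the construction of $F_\alpha^k$ below. This choice of $k$ is the only point where the ``$k$ large'' hypothesis is used.

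I would then restrict attention to the subfamily of cubes $Q_\alpha^k$ that meet $\Omega_n$. For any such cube the center $z_\alpha^k\in Q_\alpha^k$ (since $B(z_\alpha^k,a_0\delta^k)\subset Q_\alpha^k$) lies within $\rho$-distance $c_1\delta^k$ of $\Omega_n$, by the diameter estimate in~(d); a single application of the quasi-triangle inequality in $\Omega_{n+1}$ then places $B(z_\alpha^k,c_1\delta^k)$ inside the $2\varepsilon_n$-neighborhood of $\Omega_n$, hence inside $\Omega_{n+1}$ by~(H4)(ii). This gives clause~(i). Finiteness of the selected subfamily follows because the cubes at level $k$ are mutually disjoint by~(e), each contains a ball $B(z_\alpha^k,a_0\delta^k)$ whose $\mu$-measure is bounded below uniformly (by~(H6) and a compactness argument on $\overline{\Omega_n}$), and they all sit inside $\Omega_{n+1}$ which has finite measure.

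To obtain~(ii), for each selected $Q_\alpha^k$ I would let $F_\alpha^k$ be the union of all cubes $Q_\beta^k$ at level $k$ that intersect $B(z_\alpha^k,c_1\delta^k)$. The finiteness of this union follows from the same measure argument, and~(d) together with the quasi-triangle inequality yields a constant $c'=c'(n,c_1)$ such that $F_\alpha^k\subset B(z_\alpha^k,c'\delta^k)\subset\Omega_{n+1}$. The essential inclusion $B(z_\alpha^k,c_1\delta^k)\subset F_\alpha^k$ is immediate from~(f), since the points missed by the union lie in a set of zero measure. The main obstacle is verifying that $F_\alpha^k$ itself satisfies the trace-doubling~(h). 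I would split into regimes: when $r$ is small enough that $B(x,r)$ sits inside a single constituent cube $Q_\beta^k$, the inequality follows from doubling of $\mu$ on $\rho$-balls contained in $\Omega_{n+1}$, by~(H6); for intermediate $r$ the estimate is inherited from~(h) applied to each constituent cube together with a finite-union bound; and for $r$ comparable to or larger than $c'\delta^k$ both traces $B(x,r)\cap F_\alpha^k$ and $B(x,2r)\cap F_\alpha^k$ equal $\mu(F_\alpha^k)$ so the inequality is trivial. Tracking the constants across these three regimes, and in particular the dependence on $n$, is the most delicate bookkeeping.
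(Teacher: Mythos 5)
Your argument for clauses (i) and (ii) follows essentially the same route as the paper: apply Theorem~\ref{Thm dyadic cubes} with $\Omega_{n+1}$ in the role of the base set, retain the cubes meeting $\Omega_n$, observe that each such cube (hence each ball $B(z_\alpha^k,c_1\delta^k)$) carries a point of $\Omega_n$ so that for $k$ large it sits in $\Omega_{n+1}$, and define $F_\alpha^k$ as the union of all level-$k$ cubes intersecting $B(z_\alpha^k,c_1\delta^k)$. Your finiteness argument via the uniform lower measure bound is a fine (slightly more explicit) addition. The divergence from the paper is in how you handle the trace-doubling of $F_\alpha^k$: the paper simply cites a ready-made fact (\cite[Corollary~3.9]{BZ} together with (\ref{doubling})) that any finite union of same-level dyadic cubes is a space of homogeneous type, whereas you attempt a direct three-regime proof.

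That direct proof has a genuine gap in the middle regime. You propose to derive the estimate there from ``(h) applied to each constituent cube together with a finite-union bound.'' But property (h) reads $\mu(B(x,2r)\cap Q_\gamma^k)\le c_2\,\mu(B(x,r)\cap Q_\gamma^k)$ \emph{only for} $x\in Q_\gamma^k\setminus E$; when $x$ lies in a single constituent $Q_\beta^k$, (h) tells you nothing about $\mu(B(x,2r)\cap Q_\gamma^k)$ for $\gamma\ne\beta$, and indeed $\mu(B(x,r)\cap Q_\gamma^k)$ may well vanish while $\mu(B(x,2r)\cap Q_\gamma^k)$ does not. So there is no way to sum (h) over the constituents. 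What the middle regime actually requires is the stronger quantitative statement (\ref{lower bounds cubes}): from $x\in Q_\beta^k\setminus E$ one has $\mu(B(x,r)\cap F_\alpha^k)\ge\mu(B(x,r)\cap Q_\beta^k)\ge c_0\,\mu(B(x,r))$ when $r\le\delta^k$, and $\ge c_0\,\mu(Q_\beta^k)$ when $r>\delta^k$; combined with the upper bounds $\mu(B(x,2r)\cap F_\alpha^k)\le\mu(B(x,2r))$ (small $r$) or $\le\mu(F_\alpha^k)\le C_n\,\mu(Q_\beta^k)$ (larger $r$, using that all constituent cubes have $\mu$-measure comparable to $\mu(B(z_\alpha^k,\delta^k))$ and the union is finite), this closes the argument with only two regimes. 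Replacing your appeal to (h) by (\ref{lower bounds cubes}) is the essential correction; alternatively, you can do as the paper does and cite \cite[Corollary~3.9]{BZ} directly.
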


\begin{proof}
Since the whole $\Omega_{n+1}$ can be essentially covered by the union of the
dyadic cubes $Q_{\alpha}^{k}$ subordinated to $\Omega_{n+1}$, $\Omega_{n}$ is
essentially covered by a subfamily of these. By (d) in the previous theorem,
for each $Q_{\alpha}^{k}$ of these cubes, there exists a ball $B\left(
z_{\alpha}^{k},c_{1}\delta^{k}\right)  $ such that $Q_{\alpha}^{k}\subset
B\left(  z_{\alpha}^{k},c_{1}\delta^{k}\right)  \subset\Omega_{n+2}$. However,
since $Q_{\alpha}^{k},$ and then $B\left(  z_{\alpha}^{k},c_{1}\delta
^{k}\right)  $, contains a point of $\Omega_{n}$, for $k$ large enough
$B\left(  z_{\alpha}^{k},c_{1}\delta^{k}\right)  \subset\Omega_{n+1}$, that is
(i) holds.

Let $F_{\alpha}^{k}$ the union of all the dyadic cubes $Q_{\beta}^{k}$
intersecting $B\left(  z_{\alpha}^{k},c_{1}\delta^{k}\right)  $. Since
$B\left(  z_{\alpha}^{k},c_{1}\delta^{k}\right)  \subset\Omega_{n+1}$ which is
essentially covered by the union of all the dyadic cubes $Q_{\beta}^{k}$, then
$B\left(  z_{\alpha}^{k},c_{1}\delta^{k}\right)  $ is essentially covered by
$F_{\alpha}^{k}$. Since, by (d) in the previous theorem, $\operatorname*{diam}%
\left(  Q_{\beta}^{k}\right)  <c_{1}\delta^{k}$, and each $Q_{\beta}^{k}$
contained in $F_{\alpha}^{k}$ intersects $B\left(  z_{\alpha}^{k},c_{1}%
\delta^{k}\right)  $, $\operatorname*{diam}F_{\alpha}^{k}$ is comparable to
$\delta^{k}$, so $F_{\alpha}^{k}\subset$ $B\left(  z_{\alpha}^{k},c^{\prime
}\delta^{k}\right)  $, which is again contained in $\Omega_{n+1}$, for $k$
large enough, since the ball contains a point of $\Omega_{n}$. Finally, any
finite union of dyadic cubes satisfies the doubling condition, by
(\ref{doubling}) and \cite[Corollary 3.9]{BZ}.
\end{proof}

\section{The local sharp maximal function}

As we have explained in the Introduction, the proof of the sharp maximal
inequality will be achieved following the approach in \cite{Iw}, which
exploits a suitable version of Calder\'{o}n-Zygmund decomposition. We start
proving in the context of locally homogeneous spaces the following
decomposition lemma.

For any measurable set $E$ and function $f\in L^{1}\left(  E\right)  $, let
\[
f_{E}=\frac{1}{\left\vert E\right\vert }\int_{E}f.
\]

\begin{lemma}
\label{Lemma decomposition}For fixed $\Omega_{n},\Omega_{n+1}$ we consider the
family $\Delta_{n}$ of dyadic cubes built in Theorem \ref{Thm dyadic cubes}.
Let $Q_{\alpha_{1}}^{1}$ be a fixed dyadic cube (\textquotedblleft of first
generation\textquotedblright) and let $f\in L^{1}\left(  Q_{\alpha_{1}}%
^{1}\right)  $. For any $\lambda\geq a\equiv\left\vert f\right\vert
_{Q_{\alpha_{1}}^{1}}$ there exists a countable family $\mathcal{C}_{\lambda
}=\left\{  Q_{\lambda,j}\right\}  _{j=1,2,...}$ of pairwise disjoint dyadic
subcubes of $Q_{\alpha_{1}}^{1}$ such that:

(i) $\lambda<\left\vert f\right\vert _{Q_{\lambda,j}}\leq c_{n}\lambda$ for
$j=1,2,...$;

(ii) if $\lambda\geq\mu\geq a$ then each cube $Q_{\lambda,j}$ is a subcube of
one from the family $\mathcal{C}_{\mu}$;

(iii) $\left\vert f\left(  x\right)  \right\vert \leq\lambda$ for a.e. $x\in
Q_{\alpha_{1}}^{1}\setminus\bigcup\limits_{j}Q_{\lambda,j}$;

(iv) $\sum_{j}\left\vert Q_{\lambda,j}\right\vert \leq\left\vert \left\{  x\in
Q_{\alpha_{1}}^{1}:Mf\left(  x\right)  >\frac{\lambda}{c_{n}^{\prime}%
}\right\}  \right\vert $;

(v) $\left\vert \left\{  x\in Q_{\alpha_{1}}^{1}:Mf\left(  x\right)
>c_{n}^{\prime\prime}\lambda\right\}  \right\vert \leq c_{n}^{\prime
\prime\prime}\sum_{j}\left\vert Q_{\lambda,j}\right\vert $

\noindent where $c_{n},c_{n}^{\prime},c_{n}^{\prime\prime},c_{n}^{\prime
\prime\prime}$ are constants $>1$ only depending on $n$ and we let for
simplicity%
\[
Mf=M_{\Omega_{n+1},\Omega_{n+2}}\left(  f\chi_{Q_{\alpha_{1}}^{1}}\right)
\]
(i.e., the local maximal function is computed after extending $f$ to zero
outside $Q_{\alpha_{1}}^{1}$).
\end{lemma}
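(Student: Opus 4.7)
The plan is to imitate the classical Calder\'on--Zygmund stopping-time construction inside the dyadic tree subordinated to $\Omega_n$. Since $|f|_{Q_{\alpha_1}^1}=a\leq\lambda$, I would start at $Q_{\alpha_1}^1$ and recursively examine the dyadic children of any cube not yet stopped: a child is added to $\mathcal{C}_\lambda$ as soon as its average of $|f|$ exceeds $\lambda$, and otherwise the recursion continues on its own children. The resulting family $\{Q_{\lambda,j}\}$ consists of the maximal dyadic subcubes of $Q_{\alpha_1}^1$ with $|f|_{Q_{\lambda,j}}>\lambda$; they are pairwise disjoint by the nesting property (e) of Theorem \ref{Thm dyadic cubes}. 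Property (ii) is then immediate: since $|f|_{Q_{\lambda,j}}>\lambda\geq\mu$, walking up from $Q_{\lambda,j}$ one meets a maximal ancestor whose average exceeds $\mu$, and this ancestor is by definition a member of $\mathcal{C}_\mu$. The lower bound in (i) is immediate from the selection rule; for the upper bound, the dyadic parent $\widetilde Q$ of $Q_{\lambda,j}$ exists (otherwise $Q_{\lambda,j}=Q_{\alpha_1}^1$, contradicting $a\leq\lambda$) and satisfies $|f|_{\widetilde Q}\leq\lambda$. Using the ball--cube sandwich $B(z_j,a_0\delta^{k_j})\subset Q_{\lambda,j}\subset\widetilde Q\subset B(\widetilde z,c_1\delta^{k_j-1})$ from (a) and (d), together with the doubling property (h) of $\widetilde Q$, one obtains $\mu(\widetilde Q)\leq c_n\mu(Q_{\lambda,j})$ and hence $|f|_{Q_{\lambda,j}}\leq c_n\lambda$.

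For (iii), fix $x\in Q_{\alpha_1}^1\setminus\left(\bigcup_j Q_{\lambda,j}\cup E\right)$, where $E$ is the null set from Theorem \ref{Thm dyadic cubes}. By (g), for every $k\geq 1$ there is a cube $Q^{(k)}(x)\ni x$, and none of these can have average greater than $\lambda$: otherwise the maximal ancestor with this property would be a selected cube containing $x$, a contradiction. Now $x\in Q^{(k)}(x)\subset B(z^k,c_1\delta^k)$ gives $B(z^k,c_1\delta^k)\subset B(x,2B_{n+2}c_1\delta^k)$ via (H5), while $Q^{(k)}(x)\supset B(z^k,a_0\delta^k)$ together with (H6) makes $\mu(Q^{(k)}(x))$ comparable to $\mu(B(x,C\delta^k))$. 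The sets $Q^{(k)}(x)$ therefore shrink to $x$ in a Vitali-regular fashion, so an elementary consequence of the Lebesgue differentiation theorem (Theorem \ref{Thm Lebesgue}) yields $|f|_{Q^{(k)}(x)}\to|f(x)|$, whence $|f(x)|\leq\lambda$.

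For (iv), each $x\in Q_{\lambda,j}$ lies in the ball $B(z_j,c_1\delta^{k_j})$, which is admissible for $M_{\Omega_{n+1},\Omega_{n+2}}$ provided $c_1\delta\leq r_{n+1}$; this condition can be enforced by shrinking $\delta$ at the cost of an $n$-dependence in the constants. The same doubling comparison as above gives $\mu(B(z_j,c_1\delta^{k_j}))\leq c_n'\mu(Q_{\lambda,j})$, so the average of $|f\chi_{Q_{\alpha_1}^1}|$ over this ball is at least $\lambda/c_n'$; hence $Q_{\lambda,j}\subset\{Mf>\lambda/c_n'\}$ up to a null set, and summing over the disjoint family proves (iv). For (v) I would use the Calder\'on--Zygmund splitting $f\chi_{Q_{\alpha_1}^1}=g+h$ with $g=f\chi_{\bigcup_j Q_{\lambda,j}}$: property (iii) gives $|h|\leq\lambda$ a.e., hence $Mh\leq\lambda$ pointwise, so $\{Mf>2\lambda\}\subset\{Mg>\lambda\}$. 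The weak-type $(1,1)$ bound of Theorem \ref{Thm maximal}(b) applied to $g$, combined with the upper bound in (i), then gives $|\{Mg>\lambda\}|\leq c_n\lambda^{-1}\sum_j\int_{Q_{\lambda,j}}|f|\leq c_n^2\sum_j|Q_{\lambda,j}|$, which is (v) with $c_n''=2$ and $c_n'''=c_n^2$. The main technical delicacy is that every measure comparison (cube vs.\ enclosing ball, cube vs.\ dyadic parent) hinges on the non-trivial doubling of the restriction in Theorem \ref{Thm dyadic cubes}(h); once that is available, the argument is a routine dyadic adaptation of the classical one.
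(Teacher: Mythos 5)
Your stopping-time construction, and your proofs of (i), (ii), (iii), and (iv), are essentially identical to the paper's. In (iii) and (iv) you are a bit more explicit than the paper about two technical points (the Vitali-regularity of shrinking dyadic cubes required by the Lebesgue differentiation theorem, and the constraint $c_1\delta\leq r_{n+1}$ making the enclosing ball $B(z_j,c_1\delta^{k_j})$ admissible for $M_{\Omega_{n+1},\Omega_{n+2}}$); the paper handles these implicitly via the freedom to shrink $\delta$ and via Remark~\ref{remark Q1 small}, and your remarks are correct.

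For (v), however, you take a genuinely different route. The paper proves (v) by a purely geometric argument: it introduces dilated cubes $KQ_{\lambda,j}=B(z_{\alpha_k}^k,Kc_1\delta^k)$ and establishes a Claim that if $x\notin\bigcup_i KQ_{\lambda,i}$ and a ball $B_r(\overline{x})\ni x$ meets some $Q_{\lambda,j}$, then $Q_{\lambda,j}\subset B_{Hr}(\overline{x})$; combining this with (i) and (iii) gives $Mf(x)\leq c_n'\lambda$ off $\bigcup_j KQ_{\lambda,j}$, and the measure of the dilated union is controlled by doubling. You instead use the classical Calder\'on--Zygmund good/bad split $f\chi_{Q_{\alpha_1}^1}=g+h$ with $g=f\chi_{\bigcup_j Q_{\lambda,j}}$: the sup bound $|h|\leq\lambda$ a.e.\ from (iii) gives $Mh\leq\lambda$, whence $\{Mf>2\lambda\}\subset\{Mg>\lambda\}$, and you close via the weak-type $(1,1)$ estimate of Theorem~\ref{Thm maximal}(b) together with the upper bound in (i). Both arguments are correct; yours is shorter and more standard, but leans on the maximal weak-type inequality as an external ingredient (available here as Theorem~\ref{Thm maximal}(b) with the indices $n+1,n+2$), whereas the paper's geometric lemma keeps the decomposition lemma self-contained and does not invoke the weak-type bound at all.
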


\begin{remark}
\label{remark Q1 small}As will be apparent from the proof, this lemma still
holds if instead of a fixed dyadic cube $Q_{\alpha_{1}}^{1}$ of the first
generation we fix a cube $Q_{\alpha_{k_{0}}}^{k_{0}}$ of some fixed generation
$k_{0}>1$. Throughout this section we will stick to the convention of
considering $Q_{\alpha_{1}}^{1}$ a cube of first generation, just to simplify
notation, however we must keep in mind that the results still hold under the
more general assumption on $Q_{\alpha_{k_{0}}}^{k_{0}}$.\ Or, saying this with
other words, we can think that the cube $Q_{\alpha_{1}}^{1}$ appearing in
Theorem \ref{Thm FS} and Corollaries \ref{coroll 1} and \ref{coroll 2} has
diameter as small as we want.
\end{remark}

\begin{proof}
By point (c) in Thm. \ref{Thm dyadic cubes} for every dyadic cube $Q\subset
Q_{\alpha_{1}}^{1}$ there exists an increasing chain of dyadic cubes%
\begin{equation}
Q=Q_{\alpha_{k}}^{k}\subset Q_{\alpha_{k-1}}^{k-1}\subset...\subset
Q_{\alpha_{1}}^{1}. \label{ascending chain}%
\end{equation}
For a fixed $\lambda\geq a=\left\vert f\right\vert _{Q_{\alpha_{1}}^{1}}$, in
order to define the family $\mathcal{C}_{\lambda}$ we say that $Q\in
\mathcal{C}_{\lambda}$ if, with the notation (\ref{ascending chain}),%
\[
\lambda<\left\vert f\right\vert _{Q}\text{ and }\left\vert f\right\vert
_{Q_{\alpha_{s}}^{s}}\leq\lambda\text{ for }s=1,2,...,k-1.
\]
Note that any two cubes in $\mathcal{C}_{\lambda}$ are disjoint, otherwise by
point (e) in Thm. \ref{Thm dyadic cubes} one should be contained in the other,
so they would be two different steps in the same chain (\ref{ascending chain}%
), which contradicts our rule of choice. Let us show that $\mathcal{C}%
_{\lambda}$ satisfies properties (i)-(iv).

(i). For $Q=Q_{\alpha_{k}}^{k}\in\mathcal{C}_{\lambda}$, by construction,
$\lambda<\left\vert f\right\vert _{Q}$ and $\left\vert f\right\vert
_{Q_{\alpha_{k-1}}^{k-1}}\leq\lambda$ hence%
\[
\left\vert f\right\vert _{Q}\leq\frac{1}{\left\vert Q_{\alpha_{k}}%
^{k}\right\vert }\int_{Q_{\alpha_{k-1}}^{k-1}}\left\vert f\right\vert
\leq\frac{\left\vert Q_{\alpha_{k-1}}^{k-1}\right\vert }{\left\vert
Q_{\alpha_{k}}^{k}\right\vert }\left\vert f\right\vert _{Q_{\alpha_{k-1}%
}^{k-1}}\leq c_{n}\lambda
\]
since by points (a) and (d) in Thm. \ref{Thm dyadic cubes},%
\[
B\left(  z_{\alpha}^{k},a_{0}\delta^{k}\right)  \subset Q_{\alpha_{k}}%
^{k}\subset Q_{\alpha_{k-1}}^{k-1}\subset B\left(  z_{\alpha_{k-1}}%
^{k-1},c_{1}\delta^{k-1}\right)  ,
\]
hence by the locally doubling condition%
\[
\frac{\left\vert Q_{\alpha_{k-1}}^{k-1}\right\vert }{\left\vert Q_{\alpha_{k}%
}^{k}\right\vert }\leq c_{n}%
\]
for some $c_{n}$ only depending on $n$ (in particular, independent of $k$).
Hence (i) is proved.

(ii). For $Q=Q_{\alpha_{k}}^{k}\in\mathcal{C}_{\lambda},\lambda\geq\mu\geq a$
we have%
\[
\left\vert f\right\vert _{Q_{\alpha_{k}}^{k}}>\lambda\geq\mu
\]
hence in the chain (\ref{ascending chain}) there is an $l$ such that
$\left\vert f\right\vert _{Q_{\alpha_{l}}^{l}}>\mu,\left\vert f\right\vert
_{Q_{\alpha_{l-1}}^{l-1}}\leq\mu$. This means that in the chain
(\ref{ascending chain}) there is a cube $Q^{\prime}=Q_{\alpha_{l}}^{l}%
\in\mathcal{C}_{\mu}$, and $Q^{\prime}\supset Q$.

(iii). Let $x\in Q_{\alpha_{1}}^{1}\setminus\bigcup\limits_{j}Q_{\lambda,j}$
and let $Q$ be any dyadic cube such that $x\in Q\subset Q_{\alpha_{1}}^{1}$.
Consider again the chain (\ref{ascending chain}) starting with $Q$. By our
choice of $x$, none of the cubes $Q_{\alpha_{l}}^{l}$ in this chain belongs to
$\mathcal{C}_{\lambda}$, and this means that $\left\vert f\right\vert
_{Q_{\alpha_{l}}^{l}}\leq\lambda$. Then by point (g) in Thm.
\ref{Thm dyadic cubes}, for a.e. $x\in Q_{\alpha_{1}}^{1}\setminus
\bigcup\limits_{j}Q_{\lambda,j}$ there exists a decreasing sequence of dyadic
cubes $\left\{  Q_{\alpha_{l}}^{l}\right\}  $ such that $\left\vert
f\right\vert _{Q_{\alpha_{l}}^{l}}\leq\lambda$ and $\cap Q_{\alpha_{l}}%
^{l}=\left\{  x\right\}  $. By Lebesgue's differentiation theorem, (iii) follows.

(iv). Let%
\[
f^{\ast}\left(  x\right)  =\sup_{x\in Q\in\Delta_{n}}\left\vert f\right\vert
_{Q}.
\]
In the previous point we have proved that for a.e. $x\in Q_{\alpha_{1}}%
^{1}\setminus\bigcup\limits_{j}Q_{\lambda,j}$ and dyadic cube $Q$ such that
$x\in Q\subset Q_{\alpha_{1}}^{1}$, we have $\left\vert f\right\vert _{Q}%
\leq\lambda$. Hence%
\[
f^{\ast}\left(  x\right)  \leq\lambda\text{ for a.e. }x\in Q_{\alpha_{1}}%
^{1}\setminus\bigcup\limits_{j}Q_{\lambda,j}.
\]
Conversely, if $x\in\bigcup\limits_{j}Q_{\lambda,j}$ then $\left\vert
f\right\vert _{Q_{\lambda,j}}>\lambda$ hence $f^{\ast}\left(  x\right)
>\lambda.$ These two facts mean that, up to a set of zero measure,%
\[
\bigcup\limits_{j}Q_{\lambda,j}=\left\{  x\in Q_{\alpha_{1}}^{1}:f^{\ast
}\left(  x\right)  >\lambda\right\}
\]
hence, since the $\left\{  Q_{\lambda,j}\right\}  _{j}$ are pairwise
disjoint,
\[
\sum_{j}\left\vert Q_{\lambda,j}\right\vert =\left\vert \left\{  x\in
Q_{\alpha_{1}}^{1}:f^{\ast}\left(  x\right)  >\lambda\right\}  \right\vert .
\]
However, again by points (a) and (d) in Thm. \ref{Thm dyadic cubes} and the
locally doubling condition,%
\[
f^{\ast}\left(  x\right)  \leq c_{n}M_{\Omega_{n+1},\Omega_{n+2}}\left(
f\chi_{Q_{\alpha_{1}}^{1}}\right)  \left(  x\right)  \equiv c_{n}Mf\left(
x\right)  ,
\]
hence (iv) follows.

(v). Let $Q_{\lambda,j}\in\mathcal{C}_{\lambda}$. For some $k=2,3,...,$ we
will have $Q_{\lambda,j}=Q_{\alpha_{k}}^{k}$ and by points (a) and (d) in Thm.
\ref{Thm dyadic cubes},%
\[
B\left(  z_{\alpha}^{k},a_{0}\delta^{k}\right)  \subset Q_{\alpha_{k}}%
^{k}\subset B\left(  z_{\alpha_{k}}^{k},c_{1}\delta^{k}\right)
\]
for some $z_{\alpha}^{k}$. For a $K>1$ to be chosen later, let $KQ_{\lambda
,j}=B\left(  z_{\alpha_{k}}^{k},Kc_{1}\delta^{k}\right)  $. For any
$x\notin\bigcup\limits_{j}KQ_{\lambda,j}$ and any ball $B=B_{r}\left(
\overline{x}\right)  $ such that $x\in B_{r}\left(  \overline{x}\right)  $ and
$r\leq r_{0}$ we have, extending $f$ to zero outside $Q_{\alpha_{1}}^{1}$ if
$B\varsubsetneq Q_{\alpha_{1}}^{1}$,%
\[
\int_{B}\left\vert f\right\vert =\int_{B\setminus\bigcup\limits_{j}%
Q_{\lambda,j}}\left\vert f\right\vert +\sum_{j}\int_{B\cap Q_{\lambda,j}%
}\left\vert f\right\vert
\]
by point (iii)%
\[
\leq\lambda\left\vert B\right\vert +\sum_{j:B\cap Q_{\lambda,j}\neq\emptyset
}\int_{Q_{\lambda,j}}\left\vert f\right\vert
\]
by point (i)
\begin{equation}
\leq\lambda\left\vert B\right\vert +c_{n}\lambda\sum_{j:B\cap Q_{\lambda
,j}\neq\emptyset}\left\vert Q_{\lambda,j}\right\vert . \label{proof (v)}%
\end{equation}
Next, we need the following\medskip

\noindent\textbf{Claim.} There exists $K,H>1$ (only depending on $n$) such
that if $x\notin\bigcup\limits_{i}KQ_{\lambda,i}$ and $B_{r}\left(
\overline{x}\right)  \cap Q_{\lambda,j}\neq\emptyset$ then $Q_{\lambda
,j}\subset B_{Hr}\left(  \overline{x}\right)  $.\medskip

\noindent\textbf{Proof of the Claim.} Recall that
\begin{align*}
B\left(  z_{\alpha}^{k},a_{0}\delta^{k}\right)   &  \subset Q_{\lambda
,j}\subset B\left(  z_{\alpha_{k}}^{k},c_{1}\delta^{k}\right) \\
KQ_{\lambda,j}  &  =B\left(  z_{\alpha_{k}}^{k},Kc_{1}\delta^{k}\right)  .
\end{align*}
Since $B_{r}\left(  \overline{x}\right)  \cap Q_{\lambda,j}\neq\emptyset$, in
particular $B_{r}\left(  \overline{x}\right)  \cap B\left(  z_{\alpha}%
^{k},a_{0}\delta^{k}\right)  \neq\emptyset$ hence%
\[
\rho\left(  \overline{x},z_{\alpha}^{k}\right)  \leq B_{n+1}\left(
r+a_{0}\delta^{k}\right)  .
\]
Since $x\in B_{r}\left(  \overline{x}\right)  $,%
\[
\rho\left(  x,z_{\alpha}^{k}\right)  \leq B_{n+2}\left(  r+\rho\left(
\overline{x},z_{\alpha}^{k}\right)  \right)  \leq B_{n+2}\left(
r+B_{n+1}\left(  r+a_{0}\delta^{k}\right)  \right)
\]
and since $x\notin KQ_{\lambda,j},$%
\[
B_{n+2}\left(  r+B_{n+1}\left(  r+a_{0}\delta^{k}\right)  \right)
>Kc_{1}\delta^{k}%
\]
which, picking $K=\frac{2B_{n+1}B_{n+2}a_{0}}{c_{1}}$, gives%
\[
\delta^{k}<\frac{\left(  1+B_{n+1}\right)  }{B_{n+1}a_{0}}r.
\]
Then $Q_{\lambda,j}\subset B\left(  z_{\alpha_{k}}^{k},c_{1}\delta^{k}\right)
\subset B_{Hr}\left(  \overline{x}\right)  $ for a suitable $H$ depending on
$n$, namely for $z\in Q_{\lambda,j},$%
\begin{align*}
\rho\left(  z,\overline{x}\right)   &  \leq B_{n+1}\left(  c_{1}\delta
^{k}+\rho\left(  \overline{x},z_{\alpha}^{k}\right)  \right)  \leq
B_{n+1}\left(  c_{1}\delta^{k}+B_{n+1}\left(  r+a_{0}\delta^{k}\right)
\right) \\
&  \leq\delta^{k}\left(  B_{n+1}c_{1}+B_{n+1}^{2}a_{0}\right)  +B_{n+1}^{2}r\\
&  \leq\frac{\left(  1+B_{n+1}\right)  }{B_{n+1}a_{0}}r\left(  B_{n+1}%
c_{1}+B_{n+1}^{2}a_{0}\right)  +B_{n+1}^{2}r\equiv Hr,
\end{align*}
which proves the Claim.

Let us come back to the proof of (v). By (\ref{proof (v)}) and the Claim we
have (since the dyadic cubes in the sum are disjoint)%
\[
\int_{B_{r}\left(  \overline{x}\right)  }\left\vert f\right\vert \leq
\lambda\left\vert B_{r}\left(  \overline{x}\right)  \right\vert +c_{n}%
\lambda\left\vert B_{Hr}\left(  \overline{x}\right)  \right\vert
\]
and, by the locally doubling condition, for $r\leq r_{n}$ small enough,%
\[
\left\vert f\right\vert _{B}\leq c_{n}^{\prime}\lambda
\]
for every $B\ni x\in Q_{\alpha_{1}}^{1}\setminus\bigcup\limits_{j}%
KQ_{\lambda,j}$, that is%
\[
Mf\left(  x\right)  \leq c_{n}^{\prime}\lambda
\]
for any such $x$, so that%
\[
\left\{  x\in Q_{\alpha_{1}}^{1}:Mf\left(  x\right)  >c_{n}^{\prime}%
\lambda\right\}  \subset\bigcup\limits_{j}KQ_{\lambda,j}%
\]
and%
\[
\left\vert \left\{  x\in Q_{\alpha_{1}}^{1}:Mf\left(  x\right)  >c_{n}%
^{\prime}\lambda\right\}  \right\vert \leq\sum_{j}\left\vert KQ_{\lambda
,j}\right\vert \leq c_{n}^{\prime\prime}\sum_{j}\left\vert Q_{\lambda
,j}\right\vert .
\]

\end{proof}

We can now prove the following local analog of Fefferman-Stein inequality. Let
us first define a version of \emph{dyadic sharp maximal function}:

\begin{definition}
\label{Def dyadic sharp}For $f\in L^{1}\left(  Q_{\alpha_{1}}^{1}\right)  $,
$x\in Q_{\alpha_{1}}^{1}$, let%
\[
f_{\Delta}^{\#}\left(  x\right)  =\sup_{\substack{x\ni Q\in\Delta
_{n}\\Q\subset Q_{\alpha_{1}}^{1}}}\frac{1}{\left\vert Q\right\vert }\int
_{Q}\left\vert f-f_{Q}\right\vert .
\]

\end{definition}

Note that this definition involves only the values of $f$ in $Q_{\alpha_{1}%
}^{1}$ (there is no need of extending $f$ outside that cube).

\begin{theorem}
[Local Fefferman-Stein inequality]\label{Thm FS}Let $f\in L^{1}\left(
Q_{\alpha_{1}}^{1}\right)  $ and assume $f_{\Delta}^{\#}\in L^{p}\left(
Q_{\alpha_{1}}^{1}\right)  $ for some $p\in\lbrack1,+\infty).$ Then $Mf\in
L^{p}\left(  Q_{\alpha_{1}}^{1}\right)  $ and%
\begin{equation}
\left(  \frac{1}{\left\vert Q_{\alpha_{1}}^{1}\right\vert }\int_{Q_{\alpha
_{1}}^{1}}\left(  Mf\right)  ^{p}\right)  ^{1/p}\leq c_{n,p}\left\{  \left(
\frac{1}{\left\vert Q_{\alpha_{1}}^{1}\right\vert }\int_{Q_{\alpha_{1}}^{1}%
}\left(  f_{\Delta}^{\#}\right)  ^{p}\right)  ^{1/p}+\left(  \frac
{1}{\left\vert Q_{\alpha_{1}}^{1}\right\vert }\int_{Q_{\alpha_{1}}^{1}%
}\left\vert f\right\vert \right)  \right\}  \label{FS ineq 1}%
\end{equation}
for some constant $c_{n,p}$ only depending on $n,p,$ where, as above,%
\[
Mf=M_{\Omega_{n+1},\Omega_{n+2}}\left(  f\chi_{Q_{\alpha_{1}}^{1}}\right)  .
\]

\end{theorem}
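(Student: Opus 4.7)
The plan is to establish a good-$\lambda$ inequality relating $Mf$ to $f_{\Delta}^{\#}$ by applying the decomposition of Lemma \ref{Lemma decomposition} at two levels $\lambda$ and $A\lambda$, for a suitable $A>c_{n}$, and then integrating it against $p\lambda^{p-1}$ via the layer cake formula. Set $a=|f|_{Q_{\alpha_{1}}^{1}}$ and, for $\lambda\geq a$, let $\mathcal{C}_{\lambda}=\{Q_{\lambda,j}\}$ and $\mathcal{C}_{A\lambda}=\{Q_{A\lambda,i}\}$ be the two families produced by the lemma. By property (ii), each $Q_{A\lambda,i}$ sits inside a unique $Q_{\lambda,j}$. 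The key pointwise fact is that on $Q_{A\lambda,i}$ the average $|f|_{Q_{A\lambda,i}}$ exceeds $A\lambda$, while on the enclosing $Q_{\lambda,j}$ one has $|f_{Q_{\lambda,j}}|\leq|f|_{Q_{\lambda,j}}\leq c_{n}\lambda$ by (i). Inserting $\pm f_{Q_{\lambda,j}}$ inside the average of $|f|$ on $Q_{A\lambda,i}$ therefore gives
\[
(A-c_{n})\lambda\,|Q_{A\lambda,i}|\;\leq\;\int_{Q_{A\lambda,i}}|f-f_{Q_{\lambda,j}}|,
\]
and summing over those $i$ with $Q_{A\lambda,i}\subset Q_{\lambda,j}$ (disjoint by construction) yields
\[
\sum_{i:\,Q_{A\lambda,i}\subset Q_{\lambda,j}}|Q_{A\lambda,i}|\;\leq\;\frac{|Q_{\lambda,j}|}{(A-c_{n})\lambda}\,\inf_{x\in Q_{\lambda,j}}f_{\Delta}^{\#}(x).
\]

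Next, fix a small parameter $\gamma>0$ and split the indices $j$ according to whether $Q_{\lambda,j}$ meets the set $\{f_{\Delta}^{\#}\leq\gamma\lambda\}$. For cubes in the first group the previous inequality gives a bound of $\frac{\gamma}{A-c_{n}}|Q_{\lambda,j}|$; cubes in the second group are entirely contained in $\{f_{\Delta}^{\#}>\gamma\lambda\}$, so by disjointness their total measure is at most $|\{f_{\Delta}^{\#}>\gamma\lambda\}|$. Combining with property (iv) at level $\lambda$ and property (v) at level $A\lambda$ of Lemma \ref{Lemma decomposition} then produces the good-$\lambda$ inequality
\[
|\{Mf>c_{n}^{\prime\prime}A\lambda\}|\;\leq\;\frac{c_{n}^{\prime\prime\prime}\gamma}{A-c_{n}}\,|\{Mf>\lambda/c_{n}^{\prime}\}|\;+\;c_{n}^{\prime\prime\prime}\,|\{f_{\Delta}^{\#}>\gamma\lambda\}|,\qquad \lambda\geq a.
\]

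To conclude, multiply by $p\lambda^{p-1}$ and integrate. To avoid a circular absorption argument, truncate: for each $N$ consider $J_{N}=\int_{0}^{N}p\nu^{p-1}|\{Mf>\nu\}|\,d\nu$, which is finite because $|\{Mf>\nu\}|\leq|Q_{\alpha_{1}}^{1}|$. Split the range into $\nu\leq c_{n}^{\prime\prime}Aa$ (trivial bound) and $[c_{n}^{\prime\prime}Aa,N]$ (good-$\lambda$); after the changes of variable $\mu=\nu/(c_{n}^{\prime\prime}Ac_{n}^{\prime})$ and $t=\gamma\nu/(c_{n}^{\prime\prime}A)$ one arrives at an inequality of the form
\[
J_{N}\;\leq\;C_{1}\,a^{p}|Q_{\alpha_{1}}^{1}|+C_{2}\gamma\,J_{N}+C_{3}\,\|f_{\Delta}^{\#}\|_{L^{p}(Q_{\alpha_{1}}^{1})}^{p}
\]
with constants depending only on $n$ and $p$. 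Choosing $\gamma$ small enough that $C_{2}\gamma<1/2$ permits absorption; monotone convergence as $N\to\infty$ then yields $\|Mf\|_{L^{p}}^{p}\leq C(a^{p}|Q_{\alpha_{1}}^{1}|+\|f_{\Delta}^{\#}\|_{L^{p}}^{p})$, and division by $|Q_{\alpha_{1}}^{1}|$ followed by taking $p$-th roots gives (\ref{FS ineq 1}). The main obstacle is precisely this absorption step, which is illegitimate without an a priori finiteness estimate for $\|Mf\|_{L^{p}}$; the truncation device just described is the standard way around it.
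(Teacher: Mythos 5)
Your proposal is correct and follows essentially the same route as the paper: both apply Lemma \ref{Lemma decomposition} at two levels, both exploit the mean-oscillation bound on a parent cube to control the measure of the children, and both multiply by $p\lambda^{p-1}$, truncate, and absorb. The only cosmetic differences are that the paper fixes the scale ratio between the two levels at $2c_{n}$ and uses a single free parameter $A$ (so the sharp function appears via $\{f_{\Delta}^{\#}>\lambda/A\}$), whereas you keep the ratio $A$ free and introduce a second threshold parameter $\gamma$, and that the paper's case split is on $\frac{1}{|Q|}\int_{Q}|f-f_{Q}|\lessgtr\lambda/A$ while yours is on whether $Q_{\lambda,j}$ meets $\{f_{\Delta}^{\#}\leq\gamma\lambda\}$; these yield the same estimate. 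Your explicit truncation via $J_{N}$ and your remark on why absorption would otherwise be circular are exactly what the paper does (it integrates $\lambda$ over $(2c_{n}a,N)$ and lets $N\to\infty$ at the end), so no gap.
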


\begin{proof}
Let $a=\left\vert f\right\vert _{Q_{\alpha_{1}}^{1}}$. We start proving the
following estimate: for every $\lambda\geq2c_{n}a$ and every $A>0$,
\begin{equation}
\sum_{j}\left\vert Q_{\lambda,j}\right\vert \leq\left\vert \left\{  x\in
Q_{\alpha_{1}}^{1}:f_{\Delta}^{\#}\left(  x\right)  >\frac{\lambda}%
{A}\right\}  \right\vert +\frac{2}{A}\sum_{Q\in\mathcal{C}_{\lambda/2c_{n}}%
}\left\vert Q\right\vert . \label{dim_sharp3}%
\end{equation}

Fix $\lambda\geq2c_{n}a$. By point (i) in Lemma \ref{Lemma decomposition}, for
any $Q_{\lambda,k}\in\mathcal{C}_{\lambda},$
\[
\lambda<\left\vert f\right\vert _{Q_{\lambda,j}}\leq c_{n}\lambda;
\]
also, for any $Q\in\mathcal{C}_{\lambda/2c_{n}}$,
\[
\left\vert f\right\vert _{Q}\leq c_{n}\frac{\lambda}{2c_{n}}=\frac{\lambda}{2}%
\]
so that, for such $Q_{\lambda,j}$ and $Q$,%
\begin{align}
\frac{1}{\left\vert Q_{\lambda,j}\right\vert }\int_{Q_{\lambda,j}}\left\vert
f-f_{Q}\right\vert  &  \geq\left\vert f\right\vert _{Q_{\lambda,j}}-\left\vert
f\right\vert _{Q}>\lambda-\frac{\lambda}{2}=\frac{\lambda}{2}\nonumber\\
\left\vert Q_{\lambda,j}\right\vert  &  <\frac{2}{\lambda}\int_{Q_{\lambda,j}%
}\left\vert f-f_{Q}\right\vert . \label{dim_sharp1}%
\end{align}
By point (ii) in Lemma \ref{Lemma decomposition}, since $\lambda
>\lambda/2c_{n}$, any $Q_{\lambda,j}\in\mathcal{C}_{\lambda}$ is contained in
some $Q\in\mathcal{C}_{\lambda/2c_{n}};$ also, the cubes $Q\in\mathcal{C}%
_{\lambda/2c_{n}}$, like the cubes $Q_{\lambda,j}\in\mathcal{C}_{\lambda}$ are
pairwise disjoint, hence we can write%
\begin{equation}
\sum_{j}\left\vert Q_{\lambda,j}\right\vert =\sum_{Q\in\mathcal{C}%
_{\lambda/2c_{n}}}\sum_{\substack{Q_{\lambda,j}\in\mathcal{C}_{\lambda
}\\Q_{\lambda,j}\subset Q}}\left\vert Q_{\lambda,j}\right\vert .
\label{dim_sharp2}%
\end{equation}
For any $Q\in\mathcal{C}_{\lambda/2c_{n}},$ by (\ref{dim_sharp1}) and since
the $Q_{\lambda,j}$ are disjoint
\[
\sum_{\substack{Q_{\lambda,j}\in\mathcal{C}_{\lambda}\\Q_{\lambda,j}\subset
Q}}\left\vert Q_{\lambda,j}\right\vert \leq\sum_{\substack{Q_{\lambda,j}%
\in\mathcal{C}_{\lambda}\\Q_{\lambda,j}\subset Q}}\frac{2}{\lambda}%
\int_{Q_{\lambda,j}}\left\vert f-f_{Q}\right\vert \leq\frac{2}{\lambda}%
\int_{Q}\left\vert f-f_{Q}\right\vert .
\]
Let now fix a number $A>0$ and distinguish two cases:

a. If $\frac{1}{\left\vert Q\right\vert }\int_{Q}\left\vert f-f_{Q}\right\vert
\leq\frac{\lambda}{A},$ then
\[
\sum_{\substack{Q_{\lambda,j}\in\mathcal{C}_{\lambda}\\Q_{\lambda,j}\subset
Q}}\left\vert Q_{\lambda,j}\right\vert \leq\frac{2}{\lambda}\frac{\lambda}%
{A}\left\vert Q\right\vert =\frac{2}{A}\left\vert Q\right\vert .
\]

b. If $\frac{1}{\left\vert Q\right\vert }\int_{Q}\left\vert f-f_{Q}\right\vert
>\frac{\lambda}{A}$, then for every $x\in Q$%
\[
f_{\Delta}^{\#}\left(  x\right)  >\frac{\lambda}{A}\text{,}%
\]
that is%
\[
Q\subset\left\{  x\in Q_{\alpha_{1}}^{1}:f_{\Delta}^{\#}\left(  x\right)
>\frac{\lambda}{A}\right\}
\]
and%
\[
\sum_{\substack{Q_{\lambda,j}\in\mathcal{C}_{\lambda}\\Q_{\lambda,j}\subset
Q}}\left\vert Q_{\lambda,j}\right\vert \leq\left\vert Q\cap\left\{  x\in
Q_{\alpha_{1}}^{1}:f_{\Delta}^{\#}\left(  x\right)  >\frac{\lambda}%
{A}\right\}  \right\vert .
\]
In any case we can write%
\[
\sum_{\substack{Q_{\lambda,j}\in\mathcal{C}_{\lambda}\\Q_{\lambda,j}\subset
Q}}\left\vert Q_{\lambda,j}\right\vert \leq\left\vert Q\cap\left\{  x\in
Q_{\alpha_{1}}^{1}:f_{\Delta}^{\#}\left(  x\right)  >\frac{\lambda}%
{A}\right\}  \right\vert +\frac{2}{A}\left\vert Q\right\vert .
\]
Adding up these inequalities for $Q\in\mathcal{C}_{\lambda/2c_{n}}$, recalling
(\ref{dim_sharp2}) and the fact that the cubes $Q$ are pairwise disjoint, we
get (\ref{dim_sharp3}).

By (\ref{dim_sharp3}) and points (v), (iv) in Lemma \ref{Lemma decomposition},
we have
\begin{align}
&  \left\vert \left\{  x\in Q_{\alpha_{1}}^{1}:Mf\left(  x\right)
>c_{n}^{\prime\prime}\lambda\right\}  \right\vert \nonumber\\
&  \leq c_{n}^{\prime\prime\prime}\left(  \left\vert \left\{  x\in
Q_{\alpha_{1}}^{1}:f_{\Delta}^{\#}\left(  x\right)  >\frac{\lambda}%
{A}\right\}  \right\vert +\frac{2}{A}\sum_{Q\in\mathcal{C}_{\lambda/2c_{n}}%
}\left\vert Q\right\vert \right) \nonumber\\
&  \leq c_{n}^{\prime\prime\prime}\left(  \left\vert \left\{  x\in
Q_{\alpha_{1}}^{1}:f_{\Delta}^{\#}\left(  x\right)  >\frac{\lambda}%
{A}\right\}  \right\vert +\frac{2}{A}\left\vert \left\{  x\in Q_{\alpha_{1}%
}^{1}:Mf\left(  x\right)  >\frac{\lambda}{2c_{n}c_{n}^{\prime}}\right\}
\right\vert \right)  \label{dim_sharp4}%
\end{align}
for any $A>0,\lambda\geq2c_{n}a$.

We now want to compute integrals using the identity (for any $F\in
L^{p}\left(  Q_{\alpha_{1}}^{1}\right)  ,1\leq p<\infty$)%
\[
\int_{Q_{\alpha_{1}}^{1}}\left\vert F\left(  y\right)  \right\vert ^{p}%
dy=\int_{0}^{+\infty}pt^{p-1}\left\vert \left\{  x\in Q_{\alpha_{1}}%
^{1}:\left\vert F\left(  x\right)  \right\vert >t\right\}  \right\vert dt.
\]
Letting%
\[
\mu\left(  t\right)  =\left\vert \left\{  x\in Q_{\alpha_{1}}^{1}:\left\vert
Mf\left(  x\right)  \right\vert >t\right\}  \right\vert
\]
and integrating (\ref{dim_sharp4}) for $\lambda\in\left(  2c_{n}a,N\right)  $
and any fixed $N>2c_{n}\left\vert f\right\vert _{Q_{\alpha_{1}}^{1}}$ after
multiplying by $p\lambda^{p-1}$ we have%
\begin{align}
&  \int_{2c_{n}a}^{N}p\lambda^{p-1}\mu\left(  c_{n}^{\prime\prime}%
\lambda\right)  d\lambda\nonumber\\
&  \leq c_{n}^{\prime\prime\prime}\left(  \int_{2c_{n}a}^{N}p\lambda
^{p-1}\left\vert \left\{  x\in Q_{\alpha_{1}}^{1}:f_{\Delta}^{\#}\left(
x\right)  >\frac{\lambda}{A}\right\}  \right\vert d\lambda+\frac{2}{A}%
\int_{2c_{n}a}^{N}p\lambda^{p-1}\mu\left(  \frac{\lambda}{2c_{n}c_{n}^{\prime
}}\right)  d\lambda\right)  \label{dim_sharp5}%
\end{align}

Changing variable in each of the three integrals in (\ref{dim_sharp5}) we get:%
\begin{align}
&  \int_{2c_{n}c_{n}^{\prime\prime}a}^{c_{n}^{\prime\prime}N}pt^{p-1}%
\mu\left(  t\right)  dt\nonumber\\
&  \leq\left(  c_{n}^{\prime\prime}\right)  ^{p}c_{n}^{\prime\prime\prime
}\left(  A^{p}\int_{0}^{+\infty}pt^{p-1}\left\vert \left\{  x\in Q_{\alpha
_{1}}^{1}:f_{\Delta}^{\#}\left(  x\right)  >t\right\}  \right\vert dt+\right.
\label{dim_sharp6}\\
&  \left.  \frac{2}{A}\left(  2c_{n}c_{n}^{\prime}\right)  ^{p}\int_{0}%
^{\frac{N}{2c_{n}c_{n}^{\prime}}}pt^{p-1}\mu\left(  t\right)  dt\right)
.\nonumber
\end{align}
Using also the elementary inequality%
\[
\int_{0}^{2c_{n}c_{n}^{\prime\prime}a}pt^{p-1}\mu\left(  t\right)  dt\leq
\int_{0}^{2c_{n}c_{n}^{\prime\prime}a}pt^{p-1}\left\vert Q_{\alpha_{1}}%
^{1}\right\vert dt=\left\vert Q_{\alpha_{1}}^{1}\right\vert \left(
2c_{n}c_{n}^{\prime\prime}a\right)  ^{p},
\]
together with (\ref{dim_sharp6}), since $\frac{N}{2c_{n}c_{n}^{\prime}%
}<N<c_{n}^{\prime\prime}N$ we get%
\begin{align*}
&  \int_{0}^{c_{n}^{\prime\prime}N}pt^{p-1}\mu\left(  t\right)  dt\\
&  \leq\left(  c_{n}^{\prime\prime}\right)  ^{p}c_{n}^{\prime\prime\prime
}\left(  A^{p}\int_{0}^{+\infty}pt^{p-1}\left\vert \left\{  x\in Q_{\alpha
_{1}}^{1}:f_{\Delta}^{\#}\left(  x\right)  >t\right\}  \right\vert dt\right.
\\
&  \left.  +\frac{2}{A}\left(  2c_{n}c_{n}^{\prime}\right)  ^{p}\int
_{0}^{c_{n}^{\prime\prime}N}pt^{p-1}\mu\left(  t\right)  dt\right)
+\left\vert Q_{\alpha_{1}}^{1}\right\vert \left(  2c_{n}c_{n}^{\prime\prime
}a\right)  ^{p}.
\end{align*}
Letting finally $A=4\left(  2c_{n}c_{n}^{\prime}c_{n}^{\prime\prime}\right)
^{p}c_{n}^{\prime\prime\prime}$ we conclude%
\begin{align*}
&  \int_{0}^{c_{n}^{\prime\prime}N}pt^{p-1}\mu\left(  t\right)  dt\\
&  \leq c_{n,p}\left(  \int_{0}^{+\infty}pt^{p-1}\left\vert \left\{  x\in
Q_{\alpha_{1}}^{1}:f_{\Delta}^{\#}\left(  x\right)  >t\right\}  \right\vert
dt+\left\vert Q_{\alpha_{1}}^{1}\right\vert \left\vert f\right\vert
_{Q_{\alpha_{1}}^{1}}^{p}\right)
\end{align*}
which implies that $Mf\in L^{p}\left(  Q_{\alpha_{1}}^{1}\right)  $ and%
\[
\left\Vert Mf\right\Vert _{L^{p}\left(  Q_{\alpha_{1}}^{1}\right)  }^{p}\leq
c_{n,p}\left(  \left\Vert f_{\Delta}^{\#}\right\Vert _{L^{p}\left(
Q_{\alpha_{1}}^{1}\right)  }^{p}+\left\vert Q_{\alpha_{1}}^{1}\right\vert
\left\vert f\right\vert _{Q_{\alpha_{1}}^{1}}^{p}\right)
\]
that is (\ref{FS ineq 1}).
\end{proof}

We now want to reformulate the above theorem in terms of balls, instead of
dyadic cubes, to make it more easily applicable to concrete situations. This
reformulation can be done in several ways. First of all, we introduce the
local sharp maximal functions defined by balls instead of cubes.

\begin{definition}
\label{Def sharp}For $f\in L_{loc}^{1}\left(  \Omega_{n+1}\right)  $,
$x\in\Omega_{n}$, let%
\[
f_{\Omega_{n},\Omega_{n+1}}^{\#}\left(  x\right)  =\sup_{\substack{B\left(
\overline{x},r\right)  \ni x\\\overline{x}\in\Omega_{n},r\leq\varepsilon_{n}%
}}\frac{1}{\left\vert B\left(  \overline{x},r\right)  \right\vert }%
\int_{B\left(  \overline{x},r\right)  }\left\vert f-f_{B\left(  \overline
{x},r\right)  }\right\vert .
\]

\end{definition}

Let us compare this function with its dyadic version $f_{\Delta}^{\#}$:

\begin{lemma}
\label{Lemma compare sharps}With the above notation, for any $x\in
Q_{\alpha_{1}}^{1},$%
\[
f_{\Delta}^{\#}\left(  x\right)  \leq c_{n}f_{\Omega_{n+1},\Omega_{n+2}}%
^{\#}\left(  x\right)
\]
for some constant $c_{n}$ only depending on $n$. Here the function $f$ can be
assumed either in $L_{loc}^{1}\left(  \Omega_{n+2}\right)  $ or in
$L^{1}\left(  Q_{\alpha_{1}}^{1}\right)  $ and extended to zero outside
$Q_{\alpha_{1}}^{1}$.
\end{lemma}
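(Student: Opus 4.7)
The plan is the standard comparison argument: bound the dyadic oscillation $\frac{1}{|Q|}\int_Q |f-f_Q|$ over any admissible cube $Q = Q_\alpha^k$ by the oscillation of $f$ over a circumscribed ball, then take the supremum. Specifically, I will fix $x \in Q_{\alpha_1}^1$ and a dyadic cube $Q = Q_\alpha^k \subset Q_{\alpha_1}^1$ with $x \in Q$. By Theorem~\ref{Thm dyadic cubes}(a) and (d), $Q$ is sandwiched between an inscribed and a circumscribed ball with the same center:
\[
B(z_\alpha^k, a_0 \delta^k) \subset Q \subset B(z_\alpha^k, c_1 \delta^k) \equiv B \subset \Omega_{n+2}.
\]

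Next, I will invoke the elementary ``best constant'' inequality: for any constant $c$, $\int_Q |f - f_Q| \leq 2 \int_Q |f - c|$. Taking $c = f_B$ gives
\[
\frac{1}{|Q|} \int_Q |f - f_Q| \leq \frac{2}{|Q|} \int_Q |f - f_B| \leq \frac{2 |B|}{|Q|} \cdot \frac{1}{|B|} \int_B |f - f_B|.
\]
The ratio $|B|/|Q|$ is bounded by a constant $c_n$: since $Q \supset B(z_\alpha^k, a_0 \delta^k)$, iterating the locally doubling condition (H6) a number of times depending only on $c_1/a_0$ gives $|B(z_\alpha^k, c_1 \delta^k)| \leq c_n |B(z_\alpha^k, a_0 \delta^k)| \leq c_n |Q|$; note that this is legitimate because $B \subset \Omega_{n+2}$ and its center, radius stay within the locally doubling regime governed by the constants of $\Omega_{n+1}, \Omega_{n+2}$.

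It then remains to verify that $B$ is an admissible ball in the definition of $f_{\Omega_{n+1},\Omega_{n+2}}^{\#}(x)$. First, $x \in Q \subset B$. Second, the center satisfies $z_\alpha^k \in B(z_\alpha^k, a_0 \delta^k) \subset Q_\alpha^k \subset \Omega_{n+1}$, so $z_\alpha^k \in \Omega_{n+1}$. Third, the radius $c_1 \delta^k$ satisfies $c_1 \delta^k \leq c_1 \delta \leq \varepsilon_{n+1}$ for $k \geq 1$, since as remarked after Theorem~\ref{Thm dyadic cubes} we may (and do) take $\delta$ so small that this holds. Hence
\[
\frac{1}{|B|} \int_B |f - f_B| \leq f_{\Omega_{n+1},\Omega_{n+2}}^{\#}(x),
\]
and combining the displays yields $\frac{1}{|Q|} \int_Q |f - f_Q| \leq 2 c_n\, f_{\Omega_{n+1},\Omega_{n+2}}^{\#}(x)$. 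Taking the sup over admissible dyadic $Q \ni x$ gives the claim. The two alternative hypotheses on $f$ (in $L^1_{loc}(\Omega_{n+2})$, or in $L^1(Q_{\alpha_1}^1)$ extended by zero) pose no difficulty: in both cases $f$ is integrable on $B \subset \Omega_{n+2}$, and all the computations go through verbatim.

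There is no substantial obstacle; the only steps requiring care are the bookkeeping for the doubling bound $|B|/|Q| \leq c_n$ (whose constant depends on $n$ through the constants of $\Omega_{n+1}, \Omega_{n+2}$, consistent with the paper's conventions) and the verification that the geometric choice of $\delta$ makes $c_1 \delta^k \leq \varepsilon_{n+1}$, so that $B$ genuinely enters the supremum defining $f_{\Omega_{n+1},\Omega_{n+2}}^{\#}$.
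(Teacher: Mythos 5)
Your proof is correct and follows essentially the same route as the paper's: sandwich the dyadic cube $Q=Q_\alpha^k$ between the inscribed ball $B(z_\alpha^k,a_0\delta^k)$ and the circumscribed ball $B_2=B(z_\alpha^k,c_1\delta^k)$ using Theorem~\ref{Thm dyadic cubes}(a),(d), use the locally doubling condition to control the measure ratio, and trade $f_Q$ for $f_{B_2}$. The only cosmetic difference is the ordering of the two elementary moves: you first apply the ``best constant'' inequality $\int_Q|f-f_Q|\le 2\int_Q|f-c|$ with $c=f_{B_2}$ and then enlarge the domain from $Q$ to $B_2$, whereas the paper first enlarges to $B_2$ and then adds and subtracts $f_{B_2}$ inside the integral, bounding $|f_Q-f_{B_2}|$ separately; this gives you the slightly cleaner constant $2c_n$ versus the paper's $c_n(1+c_n)$. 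Your explicit verification that $B_2$ is an admissible competitor in the supremum defining $f^{\#}_{\Omega_{n+1},\Omega_{n+2}}(x)$ (center in $\Omega_{n+1}$, radius at most $\varepsilon_{n+1}$ after choosing $\delta$ small) is a useful detail that the paper leaves implicit.
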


\begin{proof}
For any dyadic cube $Q=Q_{\alpha_{k}}^{k}\subset Q_{\alpha_{1}}^{1}%
\subset\Omega_{n+1}$ we have (see points (a), (d) in Theorem
\ref{Thm dyadic cubes})%
\[
B\left(  z_{\alpha}^{k},a_{0}\delta^{k}\right)  \subset Q\subset B\left(
z_{\alpha_{k}}^{k},c_{1}\delta^{k}\right)  \subset\Omega_{n+2}.
\]
Let us briefly write $B_{1},B_{2}$ in place of $B\left(  z_{\alpha}^{k}%
,a_{0}\delta^{k}\right)  ,B\left(  z_{\alpha_{k}}^{k},c_{1}\delta^{k}\right)
$. Then by the locally doubling condition
\[
\frac{\left\vert B_{2}\right\vert }{\left\vert Q\right\vert }\leq
\frac{\left\vert B_{2}\right\vert }{\left\vert B_{1}\right\vert }\leq c_{n}%
\]
and we can write%
\begin{align*}
\frac{1}{\left\vert Q\right\vert }\int_{Q}\left\vert f-f_{Q}\right\vert  &
\leq\frac{1}{\left\vert Q\right\vert }\int_{B_{2}}\left\vert f-f_{Q}%
\right\vert \\
&  \leq c_{n}\frac{1}{\left\vert B_{2}\right\vert }\int_{B_{2}}\left\vert
f-f_{Q}\right\vert \\
&  \leq c_{n}\left(  \frac{1}{\left\vert B_{2}\right\vert }\int_{B_{2}%
}\left\vert f-f_{B_{2}}\right\vert +\left\vert f_{Q}-f_{B_{2}}\right\vert
\right)  .
\end{align*}
Also,%
\begin{align*}
\left\vert f_{Q}-f_{B_{2}}\right\vert  &  =\left\vert \frac{1}{\left\vert
Q\right\vert }\int_{Q}\left(  f-f_{B_{2}}\right)  \right\vert \leq\frac
{1}{\left\vert Q\right\vert }\int_{Q}\left\vert f-f_{B_{2}}\right\vert \\
&  \leq c_{n}\frac{1}{\left\vert B_{2}\right\vert }\int_{B_{2}}\left\vert
f-f_{B_{2}}\right\vert
\end{align*}
hence%
\[
\frac{1}{\left\vert Q\right\vert }\int_{Q}\left\vert f-f_{Q}\right\vert \leq
c_{n}\left(  1+c_{n}\right)  \frac{1}{\left\vert B_{2}\right\vert }\int
_{B_{2}}\left\vert f-f_{B_{2}}\right\vert
\]
\bigskip and the assertion follows.
\end{proof}

Exploiting the previous Lemma and Theorem \ref{Thm Lebesgue} we can now
rewrite the statement of Theorem \ref{Thm FS} as follows:

\begin{corollary}
\label{coroll 1}Let $f\in L^{1}\left(  Q_{\alpha_{1}}^{1}\right)  $ and assume
$f_{\Omega_{n+1},\Omega_{n+2}}^{\#}\in L^{p}\left(  Q_{\alpha_{1}}^{1}\right)
$ for some $p\in\lbrack1,+\infty).$ Then $f\in L^{p}\left(  Q_{\alpha_{1}}%
^{1}\right)  $ and%
\begin{align*}
&  \left(  \frac{1}{\left\vert Q_{\alpha_{1}}^{1}\right\vert }\int
_{Q_{\alpha_{1}}^{1}}\left\vert Mf\right\vert ^{p}\right)  ^{1/p}\\
&  \leq c_{n,p}\left\{  \left(  \frac{1}{\left\vert Q_{\alpha_{1}}%
^{1}\right\vert }\int_{Q_{\alpha_{1}}^{1}}\left(  f_{\Omega_{n+1},\Omega
_{n+2}}^{\#}\right)  ^{p}\right)  ^{1/p}+\left(  \frac{1}{\left\vert
Q_{\alpha_{1}}^{1}\right\vert }\int_{Q_{\alpha_{1}}^{1}}\left\vert
f\right\vert \right)  \right\}
\end{align*}
for some constant $c_{n,p}$ only depending on $n,p$. Here $M$ is defined as in
Theorem \ref{Thm FS} and, again, the function $f$ can be assumed either in
$L_{loc}^{1}\left(  \Omega_{n+2}\right)  $ or in $L^{1}\left(  Q_{\alpha_{1}%
}^{1}\right)  $ and extended to zero outside $Q_{\alpha_{1}}^{1}$.
\end{corollary}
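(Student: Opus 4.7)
The plan is to derive Corollary \ref{coroll 1} as an essentially immediate consequence of Theorem \ref{Thm FS} combined with Lemma \ref{Lemma compare sharps} and Theorem \ref{Thm Lebesgue}; no new Calderón--Zygmund-type construction is needed.

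First, I would invoke Lemma \ref{Lemma compare sharps}, which yields the pointwise bound $f^{\#}_{\Delta}(x) \le c_{n}\, f^{\#}_{\Omega_{n+1},\Omega_{n+2}}(x)$ for a.e.\ $x \in Q^{1}_{\alpha_{1}}$. Raising this to the $p$-th power and averaging over $Q^{1}_{\alpha_{1}}$, we obtain
\[
\left(\frac{1}{|Q^{1}_{\alpha_{1}}|}\int_{Q^{1}_{\alpha_{1}}}\bigl(f^{\#}_{\Delta}\bigr)^{p}\right)^{1/p} \le c_{n}\left(\frac{1}{|Q^{1}_{\alpha_{1}}|}\int_{Q^{1}_{\alpha_{1}}}\bigl(f^{\#}_{\Omega_{n+1},\Omega_{n+2}}\bigr)^{p}\right)^{1/p}.
\]
Since the right-hand side is finite by hypothesis, we have in particular $f^{\#}_{\Delta} \in L^{p}(Q^{1}_{\alpha_{1}})$, and the hypotheses of Theorem \ref{Thm FS} are satisfied.

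Next, I would apply Theorem \ref{Thm FS} to conclude that $Mf \in L^{p}(Q^{1}_{\alpha_{1}})$ together with the corresponding $L^{p}$ inequality in terms of $f^{\#}_{\Delta}$. Chaining with the previous display gives exactly the claimed inequality in terms of $f^{\#}_{\Omega_{n+1},\Omega_{n+2}}$, with a new constant $c_{n,p}$ still depending only on $n$ and $p$.

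Finally, to get $f \in L^{p}(Q^{1}_{\alpha_{1}})$ (and not just $Mf \in L^{p}$), I would apply Theorem \ref{Thm Lebesgue}, which provides the a.e.\ pointwise bound $|f(x)| \le M_{\Omega_{n+1},\Omega_{n+2}}(f\chi_{Q^{1}_{\alpha_{1}}})(x) = Mf(x)$ for a.e.\ $x \in Q^{1}_{\alpha_{1}} \subset \Omega_{n+1}$, so $f \in L^{p}$ is a direct consequence of $Mf \in L^{p}$. There is no real obstacle: the only thing to double-check is that the cube $Q^{1}_{\alpha_{1}}$, being subordinated to $\Omega_{n}$, is contained in $\Omega_{n+1}$ (by point (b) of Theorem \ref{Thm dyadic cubes}), so that the ball-based sharp maximal function $f^{\#}_{\Omega_{n+1},\Omega_{n+2}}$ is defined on all of $Q^{1}_{\alpha_{1}}$ and the comparison in Lemma \ref{Lemma compare sharps} is legitimate. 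Once this bookkeeping on the nesting of $\Omega_{n+1}, \Omega_{n+2}$ is in place, the corollary follows in a few lines.
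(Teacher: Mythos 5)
Your proposal is correct and takes essentially the same approach as the paper: the paper itself states only that the corollary follows by combining Lemma \ref{Lemma compare sharps} with Theorem \ref{Thm Lebesgue} to rewrite Theorem \ref{Thm FS}, and your argument simply fills in that short chain (pointwise comparison of the two sharp functions, then the dyadic Fefferman--Stein bound, then $|f|\le Mf$ a.e.\ to pass from $Mf\in L^p$ to $f\in L^p$). The bookkeeping you note on $Q^{1}_{\alpha_{1}}\subset\Omega_{n+1}$ is exactly the point that makes the comparison lemma applicable.
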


The following is also useful:

\begin{corollary}
\label{coroll 2}Let $B_{1}\subset Q_{\alpha_{1}}^{1}\subset B_{2}$ with
$B_{1},B_{2}$ concentric balls of comparable radii (like in the proof of Lemma
\ref{Lemma compare sharps}) and assume that $f\in L^{1}\left(  B_{2}\right)  $
with $f_{\Omega_{n+1},\Omega_{n+2}}^{\#}\in L^{p}\left(  Q_{\alpha_{1}}%
^{1}\right)  $ for some $p\in\lbrack1,+\infty)$ and $f_{B_{2}}=0$ (where the
function $f$ can be assumed either in $L_{loc}^{1}\left(  \Omega_{n+2}\right)
$ or in $L^{1}\left(  B_{2}\right)  $ and extended to zero outside $B_{2}$).
Then $f\in L^{p}\left(  Q_{\alpha_{1}}^{1}\right)  $ and we have%
\begin{align*}
\left(  \frac{1}{\left\vert Q_{\alpha_{1}}^{1}\right\vert }\int_{Q_{\alpha
_{1}}^{1}}\left\vert f\right\vert ^{p}\right)  ^{1/p}  &  \leq c_{n,p}\left(
\frac{1}{\left\vert Q_{\alpha_{1}}^{1}\right\vert }\int_{Q_{\alpha_{1}}^{1}%
}\left(  f_{\Omega_{n+1},\Omega_{n+2}}^{\#}\right)  ^{p}\right)  ^{1/p}\\
\left(  \frac{1}{\left\vert B_{1}\right\vert }\int_{B_{1}}\left\vert
f\right\vert ^{p}\right)  ^{1/p}  &  \leq c_{n,p}^{\prime}\left(  \frac
{1}{\left\vert B_{2}\right\vert }\int_{B_{2}}\left(  f_{\Omega_{n+1}%
,\Omega_{n+2}}^{\#}\right)  ^{p}\right)  ^{1/p}%
\end{align*}
for some constants $c_{n,p},c_{n,p}^{\prime}$ only depending on $n,p$. Also,
removing the assumption $f_{B_{2}}=0$ we can write%
\[
\left(  \frac{1}{\left\vert B_{1}\right\vert }\int_{B_{1}}\left\vert
f-f_{B_{2}}\right\vert ^{p}\right)  ^{1/p}\leq c_{n,p}^{\prime}\left(
\frac{1}{\left\vert B_{2}\right\vert }\int_{B_{2}}\left(  f_{\Omega
_{n+1},\Omega_{n+2}}^{\#}\right)  ^{p}\right)  ^{1/p}.
\]

\end{corollary}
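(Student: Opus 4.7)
The plan is to bootstrap from Corollary~\ref{coroll 1}, using the hypothesis $f_{B_{2}}=0$ to control the stray $L^{1}$-average appearing on its right-hand side. First, I would verify that, by Remark~\ref{remark Q1 small}, we may take $Q_{\alpha_{1}}^{1}$ to be a cube of sufficiently high generation so that $B_{2}$ is a legitimate competitor in the supremum defining $f_{\Omega_{n+1},\Omega_{n+2}}^{\#}$, that is, its center lies in $\Omega_{n+1}$ and its radius does not exceed $\varepsilon_{n+1}$. Under this setup, for every $x\in Q_{\alpha_{1}}^{1}\subset B_{2}$ the ball $B_{2}$ contributes to the sup, so together with $f_{B_{2}}=0$ we obtain
\[
\frac{1}{\left\vert B_{2}\right\vert }\int_{B_{2}}\left\vert f\right\vert =\frac{1}{\left\vert B_{2}\right\vert }\int_{B_{2}}\left\vert f-f_{B_{2}}\right\vert \leq f_{\Omega_{n+1},\Omega_{n+2}}^{\#}\left(  x\right).
\]

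Integrating the $p$-th power of this inequality over $Q_{\alpha_{1}}^{1}$ and using that $\left\vert Q_{\alpha_{1}}^{1}\right\vert $ and $\left\vert B_{2}\right\vert $ are comparable (by the locally doubling condition applied to the concentric balls $B_{1}\subset Q_{\alpha_{1}}^{1}\subset B_{2}$ of comparable radii), I would deduce
\[
\frac{1}{\left\vert Q_{\alpha_{1}}^{1}\right\vert }\int_{Q_{\alpha_{1}}^{1}}\left\vert f\right\vert \leq c_{n}\frac{1}{\left\vert B_{2}\right\vert }\int_{B_{2}}\left\vert f\right\vert \leq c_{n}\left(  \frac{1}{\left\vert Q_{\alpha_{1}}^{1}\right\vert }\int_{Q_{\alpha_{1}}^{1}}\left(  f_{\Omega_{n+1},\Omega_{n+2}}^{\#}\right)  ^{p}\right)  ^{1/p}.
\]
Inserting this into the estimate supplied by Corollary~\ref{coroll 1}, and using $\left\vert f\right\vert \leq Mf$ a.e.\ from Theorem~\ref{Thm Lebesgue} to dominate the left-hand side, yields the first claimed inequality.

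For the second inequality I would simply interchange $Q_{\alpha_{1}}^{1}$ with the surrounding balls using the comparability $\left\vert B_{1}\right\vert \sim\left\vert Q_{\alpha_{1}}^{1}\right\vert \sim\left\vert B_{2}\right\vert$: on the left, $B_{1}\subset Q_{\alpha_{1}}^{1}$ gives an averaging-over-a-smaller-set bound, while on the right, $Q_{\alpha_{1}}^{1}\subset B_{2}$ together with doubling lets us enlarge the domain. The third (and final) inequality follows by applying the second to $g:=f-f_{B_{2}}$; indeed $g_{B_{2}}=0$ and the sharp maximal function is translation invariant in the value of $f$, so $g_{\Omega_{n+1},\Omega_{n+2}}^{\#}=f_{\Omega_{n+1},\Omega_{n+2}}^{\#}$.

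The only genuinely subtle point is the one already addressed above, namely ensuring $B_{2}$ is an admissible ball for the definition of $f^{\#}$; this is handled cleanly by invoking Remark~\ref{remark Q1 small}. Everything else is doubling-plus-Corollary~\ref{coroll 1} bookkeeping.
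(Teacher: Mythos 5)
Your proof is correct and follows essentially the same route as the paper: use $f_{B_2}=0$ and the admissibility of $B_2$ as a competitor ball to control $\frac{1}{|Q_{\alpha_1}^1|}\int_{Q_{\alpha_1}^1}|f|$ by the $L^p$-average of $f^{\#}_{\Omega_{n+1},\Omega_{n+2}}$, feed this into Corollary~\ref{coroll 1} together with $|f|\leq Mf$ a.e., then pass to the ball form by doubling and to the general statement by subtracting $f_{B_2}$. The one place where you are more careful than the paper is in explicitly invoking Remark~\ref{remark Q1 small} to guarantee that $B_2$ has center in $\Omega_{n+1}$ and radius $\leq\varepsilon_{n+1}$, so that $B_2$ genuinely participates in the supremum defining $f^{\#}_{\Omega_{n+1},\Omega_{n+2}}$; the paper silently assumes this admissibility.
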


\begin{proof}
We can write%
\[
\frac{1}{\left\vert Q_{\alpha_{1}}^{1}\right\vert }\int_{Q_{\alpha_{1}}^{1}%
}\left\vert f\right\vert =\frac{1}{\left\vert Q_{\alpha_{1}}^{1}\right\vert
}\int_{Q_{\alpha_{1}}^{1}}\left\vert f-f_{B_{2}}\right\vert \leq c_{n}\frac
{1}{\left\vert B_{2}\right\vert }\int_{B_{2}}\left\vert f-f_{B_{2}}\right\vert
\leq c_{n}f_{\Omega_{n+1},\Omega_{n+2}}^{\#}\left(  x\right)
\]
for every $x\in B_{2}$. Averaging this inequality on $Q_{\alpha_{1}}^{1}$ we
get%
\[
\frac{1}{\left\vert Q_{\alpha_{1}}^{1}\right\vert }\int_{Q_{\alpha_{1}}^{1}%
}\left\vert f\right\vert \leq c_{n}\frac{1}{\left\vert Q_{\alpha_{1}}%
^{1}\right\vert }\int_{Q_{\alpha_{1}}^{1}}f_{\Omega_{n+1},\Omega_{n+2}}%
^{\#}\leq c_{n}\left(  \frac{1}{\left\vert Q_{\alpha_{1}}^{1}\right\vert }%
\int_{Q_{\alpha_{1}}^{1}}\left(  f_{\Omega_{n+1},\Omega_{n+2}}^{\#}\right)
^{p}\right)  ^{1/p}%
\]
so that, by recalling Corollary \ref{coroll 1}%
\[
\left(  \frac{1}{\left\vert Q_{\alpha_{1}}^{1}\right\vert }\int_{Q_{\alpha
_{1}}^{1}}\left\vert f\right\vert ^{p}\right)  ^{1/p}\leq c_{n,p}\left(
\frac{1}{\left\vert Q_{\alpha_{1}}^{1}\right\vert }\int_{Q_{\alpha_{1}}^{1}%
}\left(  f_{\Omega_{n+1},\Omega_{n+2}}^{\#}\right)  ^{p}\right)  ^{1/p},
\]
which also implies the second inequality, by the locally doubling condition
and the comparability of the radii of $B_{1},B_{2}$.
\end{proof}

Although, in the previous Corollary, the second inequality has the pleasant
feature of involving balls instead of dyadic cubes (however, note the two
different balls appearing at the left hand side of the last inequality),
remember that we cannot choose these balls as we like, since they are related
to dyadic cubes.

In concrete applications of this theory, we could use this result to bound
$\left\Vert f\right\Vert _{L^{p}\left(  \Omega_{n}\right)  }$. To this aim,
recall that the domain $\Omega_{n}$ can be covered (up to a zero measure set)
by a finite union of dyadic cubes of the kind $Q_{\alpha_{1}}^{1}$, but
$\Omega_{n}$ is not covered by the union of the corresponding \emph{smaller
}balls $B_{1}$. We then need to improve the previous corollary, replacing the
dyadic cube $Q$ on the left hand side with a \emph{larger }ball:

\begin{corollary}
\label{coroll 3}For any $n$ and every $k$ large enough, the set $\Omega_{n}$
can be covered by a finite union of balls $B_{R}\left(  x_{i}\right)  $ of
radii comparable to $\delta^{k}$ such that for any such ball $B_{R}$ and every
$f$ supported in $B_{R}$ such that $f\in L^{1}\left(  B_{R}\right)  $,
$\int_{B_{R}}f=0$, and $f_{\Omega_{n+2},\Omega_{n+3}}^{\#}\in L_{loc}%
^{p}\left(  \Omega_{n+1}\right)  $ for some $p\in\lbrack1,\infty)$, one has%
\[
\left\Vert f\right\Vert _{L^{p}\left(  B_{R}\right)  }\leq c_{n,p}\left\Vert
f_{\Omega_{n+2},\Omega_{n+3}}^{\#}\right\Vert _{L^{p}\left(  B_{\gamma
R}\right)  }%
\]
with $\gamma>1$ absolute constant.
\end{corollary}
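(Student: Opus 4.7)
The plan is to use the Covering Lemma to place each $B_R$ inside a set $F_{\alpha}^{k}$ that is a finite union of dyadic cubes and is itself contained in $B_{\gamma R}$, apply Theorem \ref{Thm FS} on each of the component dyadic cubes, and exploit the hypothesis $\int_{B_R}f = 0$ to control the average terms that arise.

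Concretely, I first invoke Lemma \ref{Covering Lemma} with $\Omega_n$ covered by dyadic cubes subordinated to $\Omega_{n+1}$: for $k$ large enough, $\Omega_n$ is essentially covered by finitely many balls $B_R = B(z_{\alpha}^{k}, c_1 \delta^{k})$, each essentially contained in a set $F_{\alpha}^{k}$ consisting of a finite union of dyadic cubes $\{Q_{\beta_\alpha}^{k}\}$, with $F_{\alpha}^{k} \subset B(z_{\alpha}^{k}, c'\delta^{k}) = B_{\gamma R} \subset \Omega_{n+1}$ and $\gamma = c'/c_1$ absolute. Since $f$ is supported in $B_R \subset F_{\alpha}^{k}$ and $\int_{B_R} f = 0$, both $f_{F_{\alpha}^{k}}$ and $f_{B_{\gamma R}}$ vanish.

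Next, I apply Theorem \ref{Thm FS} (permitted on any dyadic cube of arbitrary generation by Remark \ref{remark Q1 small}) to each $Q_{\beta_\alpha}^{k}$. Using Theorem \ref{Thm Lebesgue} to bound $|f| \leq M_{\Omega_{n+2},\Omega_{n+3}}(f\chi_{Q_{\beta_\alpha}^{k}})$ a.e. on $Q_{\beta_\alpha}^{k}$ and summing over $\beta_\alpha$, I obtain
\[
\int_{B_R} |f|^p = \sum_{\beta_\alpha} \int_{Q_{\beta_\alpha}^{k}} |f|^p \leq c_{n,p} \int_{F_{\alpha}^{k}} (f_{\Delta}^{\#})^p + c_{n,p} \sum_{\beta_\alpha} |Q_{\beta_\alpha}^{k}| \cdot |f|_{Q_{\beta_\alpha}^{k}}^p.
\]
By Lemma \ref{Lemma compare sharps} (shifted by the appropriate index, since the cubes are subordinated to $\Omega_{n+1}$), the first term is majorized by $c\int_{B_{\gamma R}}(f^{\#}_{\Omega_{n+2},\Omega_{n+3}})^p$.

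For the second term, I use the vanishing of $f_{B_{\gamma R}}$ together with local doubling and $Q_{\beta_\alpha}^{k} \subset B_{\gamma R}$ to estimate
\[
|f|_{Q_{\beta_\alpha}^{k}} \leq \frac{|B_{\gamma R}|}{|Q_{\beta_\alpha}^{k}|} \cdot \frac{1}{|B_{\gamma R}|} \int_{B_{\gamma R}} |f - f_{B_{\gamma R}}| \leq c_n f^{\#}_{\Omega_{n+2},\Omega_{n+3}}(x)
\]
for every $x \in B_{\gamma R}$; raising to the $p$-th power and averaging over $B_{\gamma R}$ gives a pointwise-to-integral bound, and summing while recalling $\sum_{\beta_\alpha}|Q_{\beta_\alpha}^{k}| = |F_{\alpha}^{k}| \leq |B_{\gamma R}|$ produces $c\int_{B_{\gamma R}}(f^{\#})^p$. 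Combining these estimates yields the claim. The main obstacle is exactly this third step: the cube-averages $|f|_{Q_{\beta_\alpha}^{k}}$ produced by Theorem \ref{Thm FS} have no reason a priori to be small, and converting them into a bound by $f^{\#}_{\Omega_{n+2},\Omega_{n+3}}$ on $B_{\gamma R}$ requires the vanishing mean on $B_R$ together with comparability (via local doubling) of $|B_R|$, $|F_{\alpha}^{k}|$, $|B_{\gamma R}|$ and each $|Q_{\beta_\alpha}^{k}|$ at the common scale $\delta^{k}$.
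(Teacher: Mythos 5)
Your proposal is correct and follows essentially the same route as the paper: you reduce to the constituent dyadic cubes $Q_{\beta_\alpha}^k$ of $F_\alpha^k$ via the Covering Lemma and bound each cube's contribution, exploiting the vanishing mean over the large ball to kill the average term. The only difference is that you unpack Corollary \ref{coroll 2} inline (invoking Theorem \ref{Thm FS}, Lemma \ref{Lemma compare sharps}, and a direct computation for $|f|_{Q_{\beta_\alpha}^k}$), whereas the paper simply cites Corollary \ref{coroll 2} on each $Q_{\beta_\alpha}^k$ with $B(z_\alpha^k,c'\delta^k)$ serving as the ball $B_2$ carrying the zero-mean hypothesis.
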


\begin{proof}
Applying Lemma \ref{Covering Lemma}, let us (essentially) cover $\Omega_{n}$
with a finite union of dyadic cubes
\[
Q_{\alpha}^{k}\subset B\left(  z_{\alpha}^{k},c_{1}\delta^{k}\right)  \subset
F_{\alpha}^{k}\subset B\left(  z_{\alpha}^{k},c^{\prime}\delta^{k}\right)
\]
(where the inclusion $B\left(  z_{\alpha}^{k},c_{1}\delta^{k}\right)  \subset
F_{\alpha}^{k}$ is only essential). We claim that the balls $B\left(
z_{\alpha}^{k},c_{1}\delta^{k}\right)  $ are the required covering of
$\Omega_{n}$. To see this, let $f$ be supported in $B\left(  z_{\alpha}%
^{k},c_{1}\delta^{k}\right)  $ and with vanishing integral. Then the same is
true for $f$ with respect to the larger ball $B\left(  z_{\alpha}%
^{k},c^{\prime}\delta^{k}\right)  $. We can then apply Corollary
\ref{coroll 2} to each dyadic cube $Q_{\beta}^{k}$ which constitutes
$F_{\alpha}^{k}$, writing:%
\[
\left(  \frac{1}{\left\vert Q_{\beta}^{k}\right\vert }\int_{Q_{\beta}^{k}%
}\left\vert f\right\vert ^{p}\right)  ^{1/p}\leq c_{n,p}\left(  \frac
{1}{\left\vert Q_{\beta}^{k}\right\vert }\int_{Q_{\beta}^{k}}\left(
f_{\Omega_{n+2},\Omega_{n+3}}^{\#}\right)  ^{p}\right)  ^{1/p}%
\]
(note that the local sharp function is $f_{\Omega_{n+2},\Omega_{n+3}}^{\#}$
because we are using dyadic balls related to $\Omega_{n+1}$), that is%
\[
\int_{Q_{\beta}^{k}}\left\vert f\right\vert ^{p}\leq c_{n,p}^{p}\int
_{Q_{\beta}^{k}}\left(  f_{\Omega_{n+2},\Omega_{n+3}}^{\#}\right)  ^{p}.
\]
Adding these inequalities for all the cubes $Q_{\beta}^{k}$ in $F_{\alpha}%
^{k}$ we get%
\begin{align*}
&  \left(  \int_{B\left(  z_{\alpha}^{k},c_{1}\delta^{k}\right)  }\left\vert
f\right\vert ^{p}\right)  ^{1/p}\leq\left(  \int_{F_{\alpha}^{k}}\left\vert
f\right\vert ^{p}\right)  ^{1/p}\\
&  \leq c_{n,p}\left(  \int_{F_{\alpha}^{k}}\left(  f_{\Omega_{n+2}%
,\Omega_{n+3}}^{\#}\right)  ^{p}\right)  ^{1/p}\leq c_{n,p}\left(
\int_{B\left(  z_{\alpha}^{k},c^{\prime}\delta^{k}\right)  }\left(
f_{\Omega_{n+2},\Omega_{n+3}}^{\#}\right)  ^{p}\right)  ^{1/p}%
\end{align*}
which is our assertion, with $R=c_{1}\delta^{k},\gamma=c^{\prime}/c_{1}$.
\end{proof}

\section{Local $BMO$ and John-Nirenberg inequality}

We start defining the space of functions with locally bounded mean oscillation
in a locally homogeneous space:

\begin{definition}
Let $f\in L^{1}(\Omega_{n+1})$. We say that $f$ belongs to $BMO(\Omega
_{n},\Omega_{n+1})$ if
\[
\lbrack f]_{n}\equiv\sup_{x\in\Omega_{n},r\leq2\epsilon_{n}}\frac{1}%
{\mu(B(x,r)}\int_{B(x,r)}|f(y)-f_{B(x,r)}|d\mu(y)<\infty.
\]

\end{definition}

The main result in this section is the following.

\begin{theorem}
[Local John-Nirenberg inequality]\label{john-nirenberg}There exist positive
constants $b_{n},R_{n}$ such that $\forall f\in BMO(\Omega_{n},\Omega_{n+1})$
and for any ball $B(a,R)$, with $a\in\Omega_{n}$ and $R\leq R_{n}$, the
following inequality holds true
\begin{equation}
\mu\left(  \{x\in B(a,R):\left\vert f(x)-f_{B(a,R)}\right\vert >\lambda
\}\right)  \leq2e^{\frac{-b_{n}\lambda}{[f]_{n}}}\mu(B(a,R))\,\quad
\forall\lambda>0.\label{JN ineq}%
\end{equation}

\end{theorem}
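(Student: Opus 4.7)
I would follow the classical iterative Calder\'on--Zygmund decomposition approach, now carried out inside the dyadic structure of Theorem \ref{Thm dyadic cubes}. The plan is to first prove the exponential decay estimate inside a suitable dyadic-like container $F$ of the ball $B(a,R)$, and then transfer the resulting bound to $B(a,R)$ itself via the doubling and size-comparability between $F$ and $B(a,R)$.

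\textbf{Container construction and one-step CZ.} I would choose $R_n$ small enough that $B(a,2R_n)\subset\Omega_{n+1}$, and for $R\leq R_n$ fix a generation $k_0$ of dyadic cubes of $\Delta_{n+1}$ with $\delta_{(n+1)}^{k_0}\sim R$. As in Lemma \ref{Covering Lemma}, let $F$ be the union of all dyadic cubes $Q_\beta^{k_0}\in\Delta_{n+1}$ meeting $B(a,R)$, so that (essentially) $B(a,R)\subset F\subset B(a,CR)\subset\Omega_{n+2}$ for some $C$ depending only on $n$; $F$ is a space of homogeneous type by point (h) of Theorem \ref{Thm dyadic cubes} together with the fact that finite unions of dyadic cubes remain doubling (\cite[Cor.~3.9]{BZ}). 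Setting $\lambda_0=K[f]_n$ for a large $K$ to be fixed, one has $\frac{1}{\mu(F)}\int_F|f-f_F|\leq c_n[f]_n$, because $F$ is sandwiched between two concentric balls of comparable measure centered at $a\in\Omega_n$. The standard dyadic stopping-time argument at level $\lambda_0$ then produces a disjoint family $\{Q_{1,j}\}$ of maximal subcubes of $F$ such that
\begin{equation*}
\lambda_0<\frac{1}{\mu(Q_{1,j})}\int_{Q_{1,j}}|f-f_F|\leq c_n\lambda_0,\qquad |f-f_F|\leq\lambda_0\text{ a.e. on }F\setminus\textstyle\bigcup_j Q_{1,j},
\end{equation*}
where the upper bound in the first inequality uses the parent-to-child volume ratio (controlled via points (a), (d), (h) of Theorem \ref{Thm dyadic cubes}), and the second uses Theorem \ref{Thm Lebesgue}. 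In particular $|f_{Q_{1,j}}-f_F|\leq c_n\lambda_0$, and summing yields $\sum_j\mu(Q_{1,j})\leq \frac{c_n}{K}\mu(F)$.

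\textbf{Iteration and conclusion.} The same oscillation bound $\frac{1}{\mu(Q_{1,j})}\int_{Q_{1,j}}|f-f_{Q_{1,j}}|\leq c_n[f]_n$ holds inside each $Q_{1,j}$ (via the same ball-to-cube comparison), so the previous step can be iterated. After $m$ iterations I would obtain a disjoint family $\{Q_{m,j}\}$ of dyadic cubes satisfying $|f-f_F|\leq m c_n\lambda_0$ a.e. on $F\setminus\bigcup_j Q_{m,j}$ and $\sum_j\mu(Q_{m,j})\leq(c_n/K)^m\mu(F)$. Choosing $K=2c_n$ makes the latter bound $\leq 2^{-m}\mu(F)$; given any $\lambda>0$, selecting $m$ with $m c_n\lambda_0\leq\lambda<(m+1)c_n\lambda_0$ yields $\mu(\{x\in F:|f-f_F|>\lambda\})\leq 2\,e^{-b_n\lambda/[f]_n}\mu(F)$ with $b_n=(\log 2)/(c_n K)$. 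The transfer from $F$ to $B(a,R)$ is then immediate, absorbing the harmless difference $|f_F-f_{B(a,R)}|\leq c_n[f]_n$ into the constant in the exponent and using $\mu(F)\leq c_n\mu(B(a,R))$ to obtain (\ref{JN ineq}).

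\textbf{Main obstacle.} The delicate technical points will be (i) guaranteeing that for every dyadic cube $Q$ appearing in the iteration, the oscillation bound $\frac{1}{\mu(Q)}\int_Q|f-f_Q|\leq c_n[f]_n$ holds with a constant depending only on $n$---this requires comparing each such $Q$ with a ball centered at a point of $\Omega_n$ of comparable measure, which in turn dictates how small $R_n$ must be; and (ii) ensuring that the cubes produced at every stage of the iteration remain inside $\Omega_{n+2}$, where the locally doubling property of $\mu$ and the dyadic parent-child ratio both apply uniformly.
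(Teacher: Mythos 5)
Your proposal takes a genuinely different route from the paper's proof. The paper follows Mateu--Mattila--Nicolau--Orobitg: it constructs, directly on the ball $S=B(a,R)$, a localized maximal operator $M_S f$ over balls contained in $\alpha_n S$, proves a weak $(1,1)$ bound for $M_S$ via the Vitali covering Lemma~\ref{Lemma Vitali}, and iterates the resulting decomposition to obtain $\mu(\{x\in S:|f-f_S|>N\lambda_1\})\leq 2^{-N}\mu(S)$. The dyadic cubes of Theorem~\ref{Thm dyadic cubes} never appear. You instead embed $B(a,R)$ into a finite union $F$ of dyadic cubes of comparable scale (as in Lemma~\ref{Covering Lemma}), run a dyadic Calder\'on--Zygmund stopping-time decomposition inside $F$, and transfer the exponential bound from $F$ back to $B(a,R)$. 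Both routes are natural; yours has the appeal of reusing the dyadic machinery already built for the sharp-maximal inequality and avoiding the Vitali-based maximal operator, while the paper's ball-based argument avoids the container $F$ and the accompanying cube-to-ball comparison constants, and in particular has no final transfer step.

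Two points in your sketch need to be nailed down. The first is the obstacle you yourself flag: the oscillation bound $\frac{1}{\mu(Q)}\int_Q|f-f_Q|\leq c_n[f]_n$ for every cube $Q$ arising in the iteration. Each such $Q$ sits between balls $B(z_\alpha^k,a_0\delta^k)\subset Q\subset B(z_\alpha^k,c_1\delta^k)$, but the center $z_\alpha^k\in\Omega_{n+2}$ need not lie in $\Omega_n$, whereas $[f]_n$ is a supremum over balls \emph{centered at points of} $\Omega_n$ only. To close this you must produce, for each such $Q$, a ball centered at $\Omega_n$, containing $Q$, and of measure comparable to $\mu(Q)$ (not merely to $\mu(B(a,R))$). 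Since the cubes deep in the iteration can have $\operatorname*{diam}Q\ll R$ and their centers need not be within distance $\sim\operatorname*{diam}Q$ of $\Omega_n$, this is not settled merely by shrinking $R_n$; an explicit argument (or a slightly enlarged version of the seminorm $[f]_n$) is required. The second point is the final transfer: passing from $\mu(\{x\in F:|f-f_F|>\lambda\})\leq 2e^{-b_n\lambda/[f]_n}\mu(F)$ to the stated inequality on $B(a,R)$ both shifts $\lambda$ by $|f_F-f_{B(a,R)}|\lesssim[f]_n$ and introduces a multiplicative factor $c_n$ from $\mu(F)\leq c_n\mu(B(a,R))$. Neither can be absorbed \emph{uniformly in $\lambda$} by adjusting the exponent alone; one should separate the regime where the right-hand side of (\ref{JN ineq}) already exceeds $\mu(B(a,R))$ (so the bound is trivial) from the regime where $\lambda$ is large and the constant $c_n$ is absorbed by decreasing $b_n$. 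This is a routine fix but should be spelled out.
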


\begin{remark}
\label{Remark R_n}As will appear from the proof, the constant $R_{n}$ is
strictly smaller than the number $2\varepsilon_{n}$ appearing in the
definition of $BMO(\Omega_{n},\Omega_{n+1})$. Explicitly, we will see that%
\begin{equation}
R_{n}=\frac{2\varepsilon_{n}}{B_{n+1}\left(  \frac{9}{2}B_{n+1}^{2}%
+3B_{n+1}+1\right)  }.\label{R_n}%
\end{equation}

\end{remark}

\begin{proof}
We can assume $[f]_{n}=1$, since (\ref{JN ineq}) does not change dividing both
$f$ and $\lambda$ for a constant.

Let%
\[
K_{n}=2B_{n+1}+3B_{n+1}^{2}%
\]
be the constant appearing in Vitali covering Lemma \ref{Lemma Vitali},%
\begin{align*}
\alpha_{n} &  =B_{n+1}\left(  \frac{3}{2}K_{n}+1\right)  \\
R_{n} &  =\frac{2\varepsilon_{n}}{\alpha_{n}}.
\end{align*}

Let $a\in\Omega_{n},$ $R\leq R_{n}$ and let $S=B(a,R)$ (since $R<\varepsilon
_{n}$, $S\subseteq\Omega_{n+1}$). The proof consists in an iterative construction.

\emph{Step 1. }We will prove that there exists a family of balls $\left\{
S_{j}\right\}  _{j=1}^{\infty}\subset S$ and constants $c,\lambda_{0}\geq1$
depending on $n$ such that:

i)
\[
\{x\in S:\left\vert f(x)-f_{S}\right\vert >\lambda_{0}\}\subset\bigcup
\limits_{j=1}^{\infty}S_{j}\subset S;
\]

ii)%
\[
\sum_{j=1}^{\infty}\mu\left(  S_{j}\right)  \leq\frac{1}{2}\mu\left(
S\right)  ;
\]

iii)%
\[
\left\vert f_{S}-f_{S_{j}}\right\vert \leq c\lambda_{0}%
\]

To prove this we start defining the maximal operator associated to $S$
letting, for any $x\in S,$%
\[
M_{S}f(x)=\sup\left\{  \frac{1}{\mu(B)}\int_{B}|f(y)-f_{S}|d\mu
(y):B\,\,\text{ball},x\in B,B\subseteq\alpha_{n}S\right\}
\]
where $\alpha_{n}S=B(a,\alpha_{n}R)\subseteq\Omega_{n+1}$ since $\alpha
_{n}R\leq2\varepsilon_{n}$.

We claim that there exists $A=A(n)>0$ such that for all $t>0$
\begin{equation}
\mu\left(  \{x\in S:M_{S}f(x)>t\}\right)  \leq\frac{A}{t}\mu(S)\,.\label{4}%
\end{equation}

To show this, let $t>0$ and let
\[
U_{t}=\{x\in S:M_{S}f(x)>t\}.
\]
For every $x\in U_{t}$ there exists a ball $B_{x}$ such that $x\in
B_{x}\subseteq\alpha_{n}S$ and
\[
\mu(B_{x})<\frac{1}{t}\int_{B_{x}}|f(y)-f_{S}|d\mu\,.
\]
Now, by Vitali Lemma \ref{Covering Lemma} there exists a countable
subcollection of disjoint balls $\{B(x_{i},r_{i})\}$ such that
\[
U_{t}\subseteq\bigcup\limits_{i=1}^{\infty}B(x_{i},K_{n}r_{i})\,.
\]
Then, since by definition of $S$ and $M_{S}f$, $\cup_{i=1}^{\infty}%
B(x_{i},r_{i})\subseteq\alpha_{n}S\subset\Omega_{n+1},$ for some constant
$A=A\left(  n\right)  $ which can vary from line to line we have%
\begin{align*}
\mu(U_{t}) &  \leq\mu\left(  \bigcup\limits_{i=1}^{\infty}B(x_{i},K_{n}%
r_{i})\right)  \leq\sum_{i=1}^{\infty}\mu\left(  B(x_{i},K_{n}r_{i})\right)
\leq A\sum_{i=1}^{\infty}\mu(B(x_{i},r_{i}))\\
&  \leq\sum_{i=1}^{\infty}\frac{A}{t}\int_{B(x_{i},r_{i})}\left\vert
f-f_{S}\right\vert d\mu=\frac{A}{t}\int_{\cup_{i=1}^{\infty}B(x_{i},r_{i}%
)}\left\vert f-f_{S}\right\vert d\mu\\
&  \leq\frac{A}{t}\int_{\alpha_{n}S}\left\vert f-f_{S}\right\vert d\mu
\leq\frac{A}{t}\left\{  \int_{\alpha_{n}S}\left\vert f-f_{\alpha_{n}%
S}\right\vert d\mu+\int_{\alpha_{n}S}\left\vert f_{\alpha_{n}S}-f_{S}%
\right\vert d\mu\right\}  \\
&  \leq\frac{A}{t}\left\{  \mu(\alpha_{n}S)[f]_{n}+\mu(\alpha_{n}%
S)|f_{S}-f_{\alpha_{n}S}|\right\}  \\
&  \leq\frac{A}{t}\mu(\alpha_{n}S)[f]_{n}\\
&  \leq\frac{A}{t}\mu(S)
\end{align*}
where we exploited the assumption $[f]_{n}=1$. Hence (\ref{4}) is proved.

Let now $\lambda_{0}>A$, we consider the following open set%
\[
U=\{x\in S:M_{S}f(x)>\lambda_{0}\}\,.
\]
We have, by (\ref{4}),%
\begin{equation}
\mu(U\cap S)=\mu(U)\leq\frac{A}{\lambda_{0}}\mu(S)<\mu(S)\,,\label{1}%
\end{equation}
from which
\[
S\cap U^{c}\not =\emptyset\,.
\]
Then for any $x\in S$ we set%
\[
r(x)=\frac{1}{2K_{n}}\rho(x,U^{c})\quad\forall x\in U.
\]
If $x,y\in S$ we have $\rho(x,y)\leq2B_{n+1}R\,.$ Then $\forall x\in U$
(taking a point $y\in U^{c}\cap S$ in the following inequality)%
\[
r(x)\leq\frac{1}{2K_{n}}\rho(x,y)\leq\frac{1}{2B_{n+1}\left(  2+3B_{n+1}%
\right)  }2B_{n+1}R\leq\frac{R}{5}%
\]
If $y\in B(x,K_{n}r(x))$ for some $x\in U$, we have
\[
\rho(y,x)<K_{n}r(x)=\frac{K_{n}}{2K_{n}}\rho(x,U^{c})<\rho(x,U^{c})
\]
then $y\in U$, from which
\begin{equation}
B(x,K_{n}r(x))\subseteq U\,.\label{JN inclusion 1b}%
\end{equation}
On the other hand%
\begin{equation}
U\subset\bigcup\limits_{x\in U}B\left(  x,r\left(  x\right)  \right)
\label{JN inclusion 1}%
\end{equation}
and from the Vitali Lemma there exists a countable sequence of disjoint balls
$\{B(x_{j},r_{j})\}$ ($r_{j}=r(x_{j})$) such that%
\[
U\subset\bigcup\limits_{j=1}^{\infty}B\left(  x_{j},K_{n}r_{j}\right)
\]
which by the inclusion (\ref{JN inclusion 1b}) means that%
\begin{equation}
U=\bigcup\limits_{j=1}^{\infty}B\left(  x_{j},K_{n}r_{j}\right)  .\label{JN 3}%
\end{equation}

Moreover $B(x_{j},3K_{n}r_{j})\cap U^{c}\not =\emptyset$ $\forall
j\in\mathbb{N}$ and $B(x_{j},3K_{n}r_{j})\subseteq\alpha_{n}S$ since
$\alpha_{n}=B_{n+1}\left(  \frac{3K_{n}}{2}+1\right)  $.

If $y\in B(x_{j},3K_{n}r_{j})\cap U^{c}$, then $M_{s}f(y)\leq\lambda_{0}$ and%
\begin{equation}
\frac{1}{\mu(B(x_{j},3K_{n}r_{j}))}\int_{B(x_{j},3K_{n}r_{j})}\left\vert
f-f_{S}\right\vert d\mu\leq\lambda_{0}.\label{JN 2}%
\end{equation}

We now set%
\[
S_{j}=B(x_{j},K_{n}r_{j})
\]
and we have, by (\ref{JN 2})%
\begin{align}
\left\vert f_{S}-f_{S_{j}}\right\vert  &  =\left\vert \frac{1}{\mu(S_{j})}%
\int_{S_{j}}fd\mu-f_{S}\right\vert \leq\frac{1}{\mu(S_{j})}\int_{S_{j}%
}\left\vert f-f_{S}\right\vert d\mu\label{14}\\
&  \leq\frac{c}{\mu(B(x_{j},3K_{n}r_{j}))}\int_{B(x_{j},3K_{n}r_{j}%
)}\left\vert f-f_{S}\right\vert d\mu\leq c\lambda_{0}\nonumber
\end{align}
which is point iii).

By the differentiation theorem we have that for a.e. $x\in S\setminus\cup
_{j}S_{j}$ (that by (\ref{JN 3}) implies that $x\in U^{c}$ so that
$M_{S}f(x)\leq\lambda_{0}$)
\[
\left\vert f(x)-f_{S}\right\vert \leq\lambda_{0}.
\]
This means that%
\[
\{x\in S:\left\vert f(x)-f_{S}\right\vert >\lambda_{0}\}\subset\bigcup
\limits_{j=1}^{\infty}S_{j}\subset S
\]
which is point i).

Moreover, by the doubling property (H7) and (\ref{1}),%
\begin{align*}
\sum_{j=1}^{\infty}\mu(S_{j}) &  \leq c\sum_{j=1}^{\infty}\mu(B(x_{j}%
,r_{j}))=c\mu\left(  \bigcup\limits_{j=1}^{\infty}B(x_{j},r_{j})\right)
\leq\\
&  \leq c\mu(U)\leq c\frac{A}{\lambda_{0}}\mu(S)=\frac{1}{2}\mu\left(
S\right)  ,
\end{align*}
having finally chosen $\lambda_{0}=2cA,$ so that also point ii) is proved and
step 1 is completed.

\emph{Step 2 }consists in doing the same construction on each ball $S_{j}$
constructed in Step 1, which allows to conclude that, for every $j_{1}%
=1,2,...$ there exists a sequence of balls $\left\{  S_{j_{1}j_{2}}\right\}
_{j_{2}=1}^{\infty}\subset S_{j_{1}}$ such that (for the same constants
$c,\lambda_{0}$ of Step 1)

i)
\[
\{x\in S_{j_{1}}:\left\vert f(x)-f_{S_{j_{1}}}\right\vert >\lambda
_{0}\}\subset\bigcup\limits_{j_{2}=1}^{\infty}S_{j_{1}j_{2}}\subset S_{j_{1}};
\]

ii)%
\[
\sum_{j_{2}=1}^{\infty}\mu\left(  S_{j_{1}j_{2}}\right)  \leq\frac{1}{2}%
\mu\left(  S_{j_{1}}\right)  ;
\]

iii)%
\[
\left\vert f_{S_{j_{1}}}-f_{S_{j_{1}j_{2}}}\right\vert \leq c\lambda_{0}.
\]
Point ii) of Step 2 and Step 1 imply%
\[
\sum_{j_{1},j_{2}=1}^{\infty}\mu\left(  S_{j_{1}j_{2}}\right)  \leq\frac{1}%
{2}\sum_{j_{1}=1}^{\infty}\mu\left(  S_{j_{1}}\right)  \leq\frac{1}{4}%
\mu\left(  S\right)  .
\]
Also, point i) of Step 2 and point iii) of Step 1, imply that for a.e. $x\in
S_{j_{1}}\setminus\bigcup\limits_{j_{2}=1}^{\infty}S_{j_{1}j_{2}}$,%
\[
\left\vert f(x)-f_{S}\right\vert \leq\left\vert f(x)-f_{S_{j_{1}}}\right\vert
+\left\vert f_{S_{j_{1}}}-f_{S}\right\vert \leq\lambda_{0}+c\lambda
_{0}<2c\lambda_{0}\,
\]
which means that
\begin{equation}
\{x\in S_{j_{1}}:\left\vert f(x)-f_{S}\right\vert >2c\lambda_{0}%
\}\subset\bigcup\limits_{j_{2}=1}^{\infty}S_{j_{1}j_{2}}\subset S_{j_{1}}.
\label{JN 4}%
\end{equation}
However, point i) of Step 1 implies%
\[
\{x\in S:\left\vert f(x)-f_{S}\right\vert >2c\lambda_{0}\}\subset\{x\in
S:\left\vert f(x)-f_{S}\right\vert >\lambda_{0}\}\subset\bigcup\limits_{j_{1}%
=1}^{\infty}S_{j_{1}}%
\]
hence (\ref{JN 4}) rewrites as%
\[
\{x\in S:\left\vert f(x)-f_{S}\right\vert >2c\lambda_{0}\}\subset
\bigcup\limits_{j_{1},j_{2}=1}^{\infty}S_{j_{1}j_{2}}%
\]
and, letting $\lambda_{1}=c\lambda_{0},$%
\begin{equation}
\mu\left(  \{x\in S:\left\vert f(x)-f_{S}\right\vert >2\lambda_{1}\}\right)
\leq\frac{1}{4}\mu\left(  S\right)  . \label{JN 6}%
\end{equation}
Relation (\ref{JN 6}) summarizes the joint consequences of Steps 1 and 2.

Proceeding this way the iterative construction, at \emph{Step }$N$\emph{ }we
will have that:%
\begin{equation}
\mu\left(  \{x\in S:\left\vert f(x)-f_{S}\right\vert >N\lambda_{1}\}\right)
\leq\frac{1}{2^{N}}\mu\left(  S\right)  . \label{JN 7}%
\end{equation}

Now, let $\lambda>0$. If $\lambda\geq\lambda_{1}$, let $N$ be the positive
integer such that
\[
N\lambda_{1}<\lambda\leq(N+1)\lambda_{1},
\]
then
\begin{align*}
\mu\left(  \left\{  x\in S:\left\vert f(x)-f_{S}\right\vert >\lambda\right\}
\right)   &  \leq\mu\left(  \left\{  x\in S:\left\vert f(x)-f_{S}\right\vert
>N\lambda_{1}\right\}  \right) \\
&  \leq\frac{1}{2^{N}}\mu\left(  S\right)  =e^{-N\log2}\mu(S)\leq2e^{-\left(
\frac{\log2}{\lambda_{1}}\right)  \lambda}\mu(S)
\end{align*}

Finally, if $0<\lambda\leq\lambda_{1},$
\[
\mu\left(  \left\{  x\in S:\left\vert f(x)-f_{S}\right\vert >\lambda\right\}
\right)  \leq\mu(S)\leq2e^{-\left(  \frac{\log2}{\lambda_{1}}\right)  \lambda
}\mu(S)
\]
and the assertion follows (recall we are assuming $[f]_{n}=1$), with
$b_{n}=\frac{\log2}{\lambda_{1}}.$
\end{proof}

\begin{definition}
Let $p\in\left(  1,+\infty\right)  $. We say that $f$ belongs to
$BMO^{p}(\Omega_{n},\Omega_{n+1})$ if $f\in L^{p}(\Omega_{n+1})$ and%
\[
\lbrack f]_{p,n}\equiv\sup_{x\in\Omega_{n},r\leq R_{n}}\left(  \frac{1}%
{\mu(B(x,r))}\int_{B(x,r)}|f(y)-f_{B(x,r)}|^{p}d\mu(y)\right)  ^{1/p}<\infty\,
\]
where $R_{n}$ is the constant appearing in (\ref{R_n}), strictly smaller than
$2\varepsilon_{n}$.
\end{definition}

Now we compare the spaces $BMO(\Omega_{n},\Omega_{n+1})$ and $BMO^{p}%
(\Omega_{n},\Omega_{n+1})$.

\begin{theorem}
\label{Thm BMOp}For any $p\in\left(  1,\infty\right)  $ and $n$ we have
\[
BMO^{p}(\Omega_{n},\Omega_{n+1})=BMO(\Omega_{n},\Omega_{n+1}).
\]
Moreover, there exists a positive constant $c_{n,p}$ such that for any $f\in
BMO(\Omega_{n},\Omega_{n+1}),$
\begin{equation}
\lbrack f]_{p,n}\leq c_{n,p}[f]_{n}\,.\label{corollario_finale}%
\end{equation}
In particular, $BMO(\Omega_{n},\Omega_{n+1})\subseteq L^{p}\left(  \Omega
_{n}\right)  $ for every $p\in\left(  1,\infty\right)  $.
\end{theorem}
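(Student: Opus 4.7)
The plan is to derive the quantitative bound (\ref{corollario_finale}) directly from the local John--Nirenberg inequality of Theorem \ref{john-nirenberg}; the set-theoretic equality $BMO^{p}=BMO$ then follows essentially for free via H\"older and a crude size estimate for larger radii, and the ``in particular'' statement $BMO\subseteq L^{p}(\Omega_{n})$ is obtained by covering $\Omega_{n}$ with finitely many admissible balls.

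For the main estimate, fix $f\in BMO(\Omega_{n},\Omega_{n+1})$; by homogeneity I may normalize $[f]_{n}=1$. For any ball $B=B(a,R)$ with $a\in\Omega_{n}$ and $R\leq R_{n}$, the layer-cake formula together with (\ref{JN ineq}) yields
\[
\int_{B}|f-f_{B}|^{p}\,d\mu=\int_{0}^{\infty}p\lambda^{p-1}\mu\bigl(\{y\in B:|f(y)-f_{B}|>\lambda\}\bigr)\,d\lambda\leq 2\mu(B)\int_{0}^{\infty}p\lambda^{p-1}e^{-b_{n}\lambda}\,d\lambda.
\]
The last integral equals $\Gamma(p+1)/b_{n}^{p}$, so dividing by $\mu(B)$ and taking the supremum gives $[f]_{p,n}\leq(2\Gamma(p+1))^{1/p}/b_{n}$ under the normalization $[f]_{n}=1$; rescaling yields (\ref{corollario_finale}) with $c_{n,p}=(2\Gamma(p+1))^{1/p}/b_{n}$. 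The opposite inclusion $BMO^{p}\subseteq BMO$ splits by radius: for $r\leq R_{n}$, H\"older bounds the $L^{1}$ mean oscillation by $[f]_{p,n}$; for $R_{n}<r\leq 2\varepsilon_{n}$, the crude estimate
\[
\frac{1}{\mu(B(x,r))}\int_{B(x,r)}|f-f_{B(x,r)}|\,d\mu\leq\frac{2\|f\|_{L^{1}(\Omega_{n+1})}}{\mu(B(x,R_{n}))}
\]
is finite, because $L^{p}(\Omega_{n+1})\subset L^{1}(\Omega_{n+1})$ and $\mu(B(x,R_{n}))$ is bounded below uniformly in $x\in\Omega_{n}$ by the locally doubling condition.

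For the final claim, cover $\Omega_{n}$ (up to null sets) by a finite family of balls $B_{i}=B(x_{i},R_{n})$ with $x_{i}\in\Omega_{n}$; this is possible by compactness of $\overline{\Omega}_{n}$. Then
\[
\int_{\Omega_{n}}|f|^{p}\,d\mu\leq 2^{p-1}\sum_{i}\Bigl(\int_{B_{i}}|f-f_{B_{i}}|^{p}\,d\mu+\mu(B_{i})|f_{B_{i}}|^{p}\Bigr),
\]
where the first integral on each $B_{i}$ is bounded by $c_{n,p}^{p}\mu(B_{i})[f]_{n}^{p}$ by the previous step, and $|f_{B_{i}}|\leq\|f\|_{L^{1}(\Omega_{n+1})}/\mu(B_{i})$ is finite since $f\in L^{1}(\Omega_{n+1})$ by definition of $BMO$. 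The main obstacle throughout is the layer-cake computation of the central estimate, which however is routine once Theorem \ref{john-nirenberg} is in hand; the only genuinely subtle bookkeeping is reconciling the mismatched radius ranges $r\leq R_{n}$ in $BMO^{p}$ and $r\leq 2\varepsilon_{n}$ in $BMO$.
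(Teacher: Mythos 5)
Your proposal is correct and follows essentially the same approach as the paper: the bound (\ref{corollario_finale}) via the layer-cake formula applied to the local John--Nirenberg estimate, the reverse inclusion by splitting radii into $r\leq R_n$ (H\"older) and $R_n<r\leq 2\varepsilon_n$ (crude bound using the uniform lower bound on $\mu(B(x,R_n))$), and the $L^p$ inclusion by a finite cover of $\Omega_n$ with balls of radius $R_n$. The only cosmetic difference is that the paper presents the reverse inclusion first and uses the triangle inequality in $L^p$ rather than the pointwise $2^{p-1}$ splitting for the final claim, but the substance is identical.
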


\begin{remark}
Comparing this result with those about the local Fefferman-Stein function
proved in the previous section (for instance, Corollary \ref{coroll 3}), we
see that the present theorem is a \textquotedblleft local\textquotedblright%
\ result in a different sense. Here, in the upper bound
(\ref{corollario_finale}), there is not an enlargement of the domain, passing
from the left to the right hand side; instead, the local seminorms $[f]_{p,n}$
are computed taking the supremum over balls of radii $r\leq R_{n}$, which is a
stricter condition than the bound $r\leq2\varepsilon_{n}$ defining the
seminorm $[f]_{n}$.
\end{remark}

\begin{proof}
Let $f\in BMO^{p}(\Omega_{n},\Omega_{n+1})$. By H\"{o}lder's inequality we can
write, for every $x\in\Omega_{n},r\leq R_{n}$,%
\begin{align*}
& \frac{1}{\mu(B(x,r))}\int_{B(x,r)}|f(y)-f_{B(x,r)}|d\mu(y)\\
& \leq\left(  \frac{1}{\mu(B(x,r))}\int_{B(x,r)}|f(y)-f_{B(x,r)}|^{p}%
d\mu(y)\right)  ^{1/p}\leq\lbrack f]_{p,n}<\infty.
\end{align*}
On the other hand, if $R_{n}<r<2\varepsilon_{n}$ we have
\begin{align*}
\frac{1}{\mu(B(x,r))}\int_{B(x,r)}|f(y)-f_{B(x,r)}|d\mu(y)  & \leq2\frac
{1}{\mu(B(x,r))}\int_{B(x,r)}|f(y)|d\mu(y)\\
& \leq2\frac{1}{\mu(B(x,R_{n}))}\int_{\Omega_{n+1}}|f(y)|d\mu(y)\\
& \leq c_{n}\left\Vert f\right\Vert _{L^{p}\left(  \Omega_{n+1}\right)  }%
\end{align*}
because%
\[
\inf_{x\in\Omega_{n}}\mu(B(x,R_{n}))\geq c_{n}>0,
\]
as can be easily proved as a consequence of the local doubling condition.
Therefore $[f]_{n}\,<\infty$ and $f\in BMO(\Omega_{n},\Omega_{n+1})$.

Conversely, to prove (\ref{corollario_finale}), let $B$ be a ball centered in
$x\in\Omega_{n}$ with radius $r\leq R_{n}$. Then by Theorem
\ref{john-nirenberg} we have%
\begin{align*}
\int_{B}|f(y)-f_{B}|^{p}d\mu(y) &  =\int_{0}^{+\infty}p\lambda^{p-1}\mu\left(
\left\{  z\in B:\left\vert f(z)-f_{B}\right\vert >\lambda\right\}  \right)
d\lambda\\
&  \leq2\int_{0}^{+\infty}p\lambda^{p-1}e^{-b_{n}\lambda/[f]_{n}}%
\mu(B)d\lambda\,\\
&  =\mu(B)[f]_{n}^{p}2p\int_{0}^{+\infty}t^{p-1}e^{-b_{n}t}dt,
\end{align*}
from which
\[
\left(  \frac{1}{\mu(B)}\int_{B}\left\vert f(y)-f_{B}\right\vert ^{p}%
d\mu(y)\right)  ^{1/p}\leq\left(  2p\int_{0}^{+\infty}t^{p-1}e^{-b_{n}%
t}dt\right)  ^{1/p}[f]_{n}=c_{n,p}[f]_{n}%
\]
which gives (\ref{corollario_finale}) and the inclusion $BMO(\Omega_{n}%
,\Omega_{n+1})\subseteq BMO^{p}(\Omega_{n},\Omega_{n+1})$.

Finally, to show that $f\in L^{p}\left(  \Omega_{n}\right)  $ we can cover
$\Omega_{n}$ with a finite collection of balls $B\left(  x,R_{n}\right)  $
with $x\in\Omega_{n}$, writing%
\begin{align*}
\left(  \frac{1}{\mu(B)}\int_{B}\left\vert f(y)\right\vert ^{p}d\mu(y)\right)
^{1/p}  & \leq\left(  \frac{1}{\mu(B)}\int_{B}\left\vert f(y)-f_{B}\right\vert
^{p}d\mu(y)\right)  ^{1/p}+\left\vert f_{B}\right\vert \\
& \leq[f]_{p,n}+\frac{1}{\mu(B)}\left\Vert f\right\Vert _{L^{1}\left(
\Omega_{n+1}\right)  }<\infty
\end{align*}
which implies the finiteness of $\left\Vert f\right\Vert _{L^{p}\left(
\Omega_{n}\right)  }$.
\end{proof}

\bigskip

\bigskip

\textsc{Marco Bramanti}

\textsc{Dipartimento di Matematica}

\textsc{Politecnico di Milano}

\textsc{Via Bonardi 9}

\textsc{20133 Milano, ITALY}

\texttt{marco.bramanti@polimi.it}

\bigskip

\textsc{Maria Stella Fanciullo}

\textsc{Dipartimento di Matematica e Informatica}

\textsc{Universit\`{a} di Catania}

\textsc{Viale Andrea Doria 6}

\textsc{95125 Catania, ITALY}

\texttt{fanciullo@dmi.unict.it}


\begin{thebibliography}{99}                                                                                               %
\bibitem {B}M. Bramanti: Singular integrals in nonhomogeneous spaces: $L^{2}$
and $L^{p}$ continuity from H\"{o}lder estimates. Rev. Mat. Iberoam. 26
(2010), no. 1, 347--366.

\bibitem {BCLP}M. Bramanti, G. Cupini, E. Lanconelli, E. Priola: Global
$L^{p}$ estimates for degenerate Ornstein-Uhlenbeck operators. Math. Z. 266
(2010), no. 4, 789--816.

\bibitem {BZ}M. Bramanti, M. Zhu: Local real analysis in locally homogeneous
spaces. Manuscripta Math. 138 (2012), no. 3-4, 477--528.

\bibitem {BZ2}M. Bramanti, M. Zhu: $L^{p}$ and Schauder estimates for
nonvariational operators structured on H\"{o}rmander vector fields with drift.
Anal. PDE 6 (2013), no. 8, 1793--1855.

\bibitem {BT}M. Bramanti, M. Toschi: The sharp maximal function approach to
$L^{p}$ estimates for operators structured on H\"{o}rmander's vector fields.
Preprint (2015).

\bibitem {Bu}S. M. Buckley: Inequalities of John-Nirenberg type in doubling
spaces. J. Anal. Math. 79 (1999), 215--240.

\bibitem{CF}A. O. Caruso, M. S. Fanciullo: $BMO$ on spaces of homogeneous type: a density result on C-C spaces. Annales Academiae Scientiarum Fennicae 32 (2007), 13-26.

\bibitem {CFT}R. E. Castillo, J. C. Ramos Fern\'{a}ndez, E. Trousselot:
Functions of bounded $\left(  \varphi,p\right)  $ mean oscillation.
Proyecciones 27 (2008), no. 2, 163--177.

\bibitem {Ch}M. Christ: A $T(b)$ theorem with remarks on analytic capacity and
the Cauchy integral. Colloq. Math. 60/61 (1990), no. 2, 601--628.

\bibitem {CW}R. R. Coifman, G. Weiss: Analyse harmonique non-commutative sur
certains espaces homog\`{e}nes. Lecture Notes in Mathematics, Vol. 242.
Springer-Verlag, Berlin-New York, 1971.

\bibitem {DY}G. Dafni, H. Yue: Some characterizations of local $bmo$ and
$h^{1}$ on metric measure spaces. Anal. Math. Phys. 2 (2012), no. 3, 285--318.

\bibitem {DRS}L. Diening, M. R\r{u}\v{z}i\v{c}ka, K. Schumacher: A
decomposition technique for John domains. Ann. Acad. Sci. Fenn. Math. 35
(2010), no. 1, 87--114.

\bibitem {FS}C. Fefferman, E. M. Stein: $H^{p}$ spaces of several variables.
Acta Math. 129 (1972), no. 3-4, 137--193.

\bibitem {Iw}T. Iwaniec: On $L^{p}$-integrability in PDEs and quasiregular
mappings for large exponents. Ann. Acad. Sci. Fenn. Ser. A I Math. 7 (1982),
no. 2, 301--322.

\bibitem {JN}F. John, L. Nirenberg: On functions of bounded mean oscillation.
Comm. Pure Appl. Math. 14 1961 415--426.

\bibitem {K}N. V. Krylov: Parabolic and elliptic equations with VMO
coefficients. Comm. Partial Differential Equations 32 (2007), no. 1-3, 453--475.

\bibitem {L}Q. Lai: The sharp maximal function on spaces of generalized
homogeneous type. J. Funct. Anal. 150 (1997), no. 1, 75--100. 

\bibitem {MMNO}J. Mateu, P. Mattila, A. Nicolau, J. Orobitg: BMO for
nondoubling measures. Duke Math. J. 102 (2000), no. 3, 533--565.
\end{thebibliography}
\end{document}